\documentclass{amsart}
\usepackage{amssymb}
\usepackage{amsrefs}

\usepackage[all]{xy}
\newtheorem{theorem}{Theorem}[section]
\newtheorem*{theorem*}{Theorem}
\newtheorem{lemma}[theorem]{Lemma}
\newtheorem{corollary}[theorem]{Corollary}
\newtheorem{proposition}[theorem]{Proposition}
\theoremstyle{definition}
\newtheorem{definition}[theorem]{Definition}
\newtheorem{example}[theorem]{Example}
\theoremstyle{remark}
\newtheorem{remark}[theorem]{Remark}

\numberwithin{equation}{section}
\newcommand{\Z}{\ensuremath{\mathbb{Z}}}   

\newcommand{\Q}{\ensuremath{\mathbb{Q}}}


\newcommand{\Ext}{\operatorname{Ext}} 

\newcommand{\Image}{\operatorname{Im}} 
\newcommand{\Ker}{\operatorname{Ker}}


\newcommand{\coclosed}[2]{\ensuremath{\xymatrix@1{ #1 \, \ar@{^{(}->}[r]^-{cc} &
#2}}}
\newcommand{\cosmall}[3]{\ensuremath{\xymatrix@1{ #1 \,
\ar@{^{(}->}[r]^-{cs}_-{#2} & #3}}}

\newcommand{\ShortExactSequence}[5]{\ensuremath{\xymatrix@1{ 0 \ar[r] &  #1 \ar[r]^-{#2} & #3 \ar[r]^-{#4} &  #5  \ar[r] & 0 }}}

\newcommand{\LongExactSequence}[5]{\ensuremath{\xymatrix@1{ 0 \ar[r] & #5 \ar[r] &#1 \ar[r]^-{#2} & #3 \ar[r]^-{#4} &  #5  \ar[r] & 0 }}}

\newcommand{\AShortExactSequence}[6]{\ensuremath{\xymatrix@1{ #1: 0 \ar[r] &  #2 \ar[r]^-{#3} & #4 \ar[r]^-{#5} &  #6  \ar[r] & 0 }}}



\begin{document}

\author{Rafail Alizade}
\address{Ya\c{s}ar University, Department of Mathematics, Izmir, Turkey.}
\email{rafail.alizade@yasar.edu.tr}

\author{Eng\.{I}n B\"{u}y\"{u}ka\c{s}{\i}k}
\address{Izmir Institute of Technology, Department of Mathematics,\\ G\"{u}lbah\c{c}ek\"{o}y\"{u}, 35430, Urla, Izmir, Turkey.}
\email{enginbuyukasik@iyte.edu.tr}

\keywords{Injective module, poor abelian groups, Pure-injective module, pi-poor abelian groups.}

\title{poor and pi-poor abelian groups}
\date{\today}

\begin{abstract}

In this paper, poor abelian groups are characterized. It is proved that an abelian group is poor if and only if its torsion part contains a direct summand isomorphic to $\oplus_{p \in P} \Z_p$, where $P$ is the set of prime integers.  We also prove that pi-poor abelian groups exist.  Namely, it is proved that the direct sum of $U^{(\mathbb{N})}$, where $U$ ranges over all nonisomorphic uniform abelian groups, is pi-poor. Moreover, for a  pi-poor abelian group $M$, it is shown that $M$ can not be torsion, and each $p$-primary component of $M$ is unbounded. Finally, we show that there are pi-poor groups which are not poor, and vise versa.
\end{abstract}

\subjclass[2000]{13C05, 13C11, 13C99, 20E34, 20E99}

\maketitle

\section{Introduction}

Let $R$ be a ring with an identity element and \emph{Mod-R} be the category of right $R$-modules.
Recall that a right $R$-module $M$ is said to be $N$-injective (or injective relative to $N$) if for every submodule $K$ of $N$ and every morphism $f: K \to M$ there exists a morphism $\overline{f}: N \to M$ such that $\overline{f}| _K=f.$ For a module $M$, as in \cite{AF}, the injectivity domain of $M$ is defined to be the collection of modules $N$ such that $M$ is $N$-injective, that is, $\mathfrak{In}^{-1}(M)=\{N \in Mod -  R | \text{$M$ is $N$-injective} \}$. Clearly, for any right $R$-module $M$, semisimple modules in \emph{Mod- R} are contained in $\mathfrak{In}^{-1}(M)$, and $M$ is injective if and only if $\mathfrak{In}^{-1}(M)=$\emph{Mod-R}.  Following \cite{poorSergio}, $M$ is called $poor$ if for every right $R$-module $N$, $M$ is $N$-injective only if $N$ is semisimple, i.e. $\mathfrak{In}^{-1}(M)$ is exactly the class of all semisimple right $R$-modules. Poor modules exists over arbitrary rings, see \cite[Proposition 1]{poorNE}. Although  poor modules exist over arbitrary rings,  their structure is not known  over certain rings including also the ring of integers.

A right $R$-module $N$ is \emph{pure-split} if every pure submodule of $N$ is a direct summand.  Let $K$ and $N$ be right $R$-modules. $K$ is \emph{$N$-pure-injective} if for each pure submodule $L$ of $N$  every homomorphism $f: L \to K$ can be extended to a homomorphism  $g: N \to K$.
Following \cite{pi-poorSergio},  a right $R$-module  $M$ is called \emph{pure-injectively poor (or simply pi-poor)} if whenever $M$ is $N$-pure-injective, then $N$ is pure-split. It is not known whether pi-poor modules exists over arbitrary rings. In particular in  \cite{pi-poorSergio} some classes of abelian groups that are not pi-poor are given but the authors  point out that they do not know  whether a pi-poor abelian group exists.

The purpose of this paper is to give a characterization of poor abelian groups, and also to prove that pi-poor abelian groups exists.

 Namely, in section 2, we prove that an abelian group $G$ is poor if and only if the torsion part of $G$ contains a direct summand isomorphic to $\oplus _{p\in P} \Z_p$, where $ P$ is the set of prime integers (Theorem \ref{poorabelian groups}).

Section 3 is devoted to the  proof of the existence of pi-poor abelian groups. Let $\{A_{\gamma}   | \gamma \in \Gamma \}$ be a complete set of representatives of isomorphism classes of reduced uniform groups. We prove that the group  $M= \bigoplus _{\gamma \in \Gamma} A_{\gamma}^{(\mathbb{N})}$ is  pi-poor (Theorem \ref{pi-poor exist}). In addition, it is proved that if $G$ is a pi-poor abelian group then $G$ is not torsion, and the $p$-primary component $T_p(G)$ of $G$ is unbounded for each prime $p$.

\section{Definitions and Preliminaries}

We recall some definitions and  results  which will be useful in the sequel. For more details we refer the reader to \cite{FuchsAbelvol1}. By group we will mean an abelian group throughout the paper. Let $p \in P$ be a prime integer.  A group $G$ is called \emph{p-group} if every nonzero element of $G$ has order $p^n$ for some $n \in \Z ^+$.
For a group $G$, $T(G)$ denote the torsion submodule of $G$. The set  $T_p(G)= \{ a \in G | p^{k} a=0\,\, \text{for some }\,\, k \in \Z^+ \}$ is a subgroup of $G$, which is called  the \emph{p-primary component} of $G$. For every torsion group $G$, we have $G=\oplus _{p \in P} T_p(G)$.
A subgroup $A$ of a group $B$ is \emph{pure in $B$} if $nA=A \cap nB$ for each integer $n$. A monomorphism (resp. epimorphism) $\alpha : A \to B$ of abelian groups is called \emph{pure} if $\alpha (A)$ (resp. $\Ker (\alpha)$) is pure in $B$.
For any group $G$, the subgroups $T(G)$ and $T_p(G)$ are pure in $G$.
A group $G$ is said to  be \emph{bounded} if $nG=0$, for some nonzero integer $n$.
Bounded groups are direct sum of cyclic groups \cite[Theorem 17.2]{FuchsAbelvol1}. An group $G$ is called a \emph{divisible group} if $nG=G$ for each positive integer $n$. A group $G$ is called a \emph{reduced group} if $G$ has no proper divisible subgroup. Note that, since $\Z$ is noetherian, every group $G$ contains  a largest divisible subgroup. Therefore $G$ can be written as $G=N\oplus D$, where $N$ is reduced and $D$ is divisible subgroup of $G$.

\begin{definition}(see \cite{FuchsAbelvol1}) Let $p \in P$. A subgroup $B$ of a group $A$ is called a \emph{p-basic subgroup of B} if it satisfies the following three conditions:

\begin{enumerate}
\item[(i)] $B$ is a direct sum of cyclic $p$-groups and infinite cyclic groups;
\item[(ii)] $B$ is $p$-pure in $A$ i.e. $pA=A \cap pB$;
\item[(iii)] $A/B$ is $p$-divisible, i.e. $p(A/B)=A/B$.
\end{enumerate}
\end{definition}

\begin{lemma}\noindent
\begin{enumerate}
\item[(a)]\cite[Theorem 32.3]{FuchsAbelvol1} Every group  $G$ contains a $p$-basic subgroup for each $p \in P$.
\item[(b)]\cite[Theorem 27.5]{FuchsAbelvol1} If $H$ is a pure and bounded subgroup of a group $G$, then $H$ is a direct summand of $G$.
\end{enumerate}
\end{lemma}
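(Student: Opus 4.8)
The plan is to recall that both parts are classical facts about abelian groups (recorded in \cite{FuchsAbelvol1} at the cited places, so that in the paper it is enough to quote them), and to reconstruct the two arguments as follows.

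For (b) I would give the conceptual proof via pure-injectivity. First, a bounded group $H$ is pure-injective (equivalently, algebraically compact): if $nH=0$, then by \cite[Theorem 17.2]{FuchsAbelvol1}, after collecting isomorphic cyclic summands, $H\cong\bigoplus_{p^{j}\mid n}(\Z/p^{j}\Z)^{(\kappa_{p,j})}$ — a \emph{finite} direct sum, since only primes dividing $n$ occur — and each homogeneous summand $(\Z/p^{j}\Z)^{(\kappa)}$, being a free (hence injective, as $\Z/p^{j}\Z$ is quasi-Frobenius) module over $\Z/p^{j}\Z$, is a direct summand of the product $(\Z/p^{j}\Z)^{\kappa}$, which is algebraically compact as a product of finite groups; and a finite direct sum of algebraically compact groups is algebraically compact. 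Granting this, if $H$ is pure in $G$ then $H\hookrightarrow G$ is a pure monomorphism, so $\mathrm{id}_{H}$ extends to a retraction $r\colon G\to H$ and $G=H\oplus\Ker r$. (The proof in \cite[Theorem 27.5]{FuchsAbelvol1} is instead elementary: one reduces to the case that $G$ is a $p$-group and inducts on the exponent of $H$; the base case is $pH=0$, where purity gives $H\cap pG=pH=0$, so $H$ embeds in the $\Z/p\Z$-vector space $G/pG$ and the preimage of a complement of its image is a complement of $H$ in $G$.) I expect the only point genuinely needing care to be exactly that bounded groups are algebraically compact — equivalently, in the elementary route, the induction step that upgrades a partial splitting to a decomposition of all of $G$.

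For (a) I would use Zorn's lemma. Call a subset $X=\{x_{i}\}_{i\in I}\subseteq G$ \emph{$p$-independent} if every $\langle x_{i}\rangle$ is either infinite cyclic or a nonzero cyclic $p$-group, the $\langle x_{i}\rangle$ generate their direct sum $L=\bigoplus_{i}\langle x_{i}\rangle$, and $L$ is $p$-pure in $G$, i.e.\ $p^{k}G\cap L=p^{k}L$ for all $k$. The empty set is $p$-independent, and a directed union of $p$-independent sets is again $p$-independent (the direct-sum condition and the $p$-purity relation both pass to directed unions), so by Zorn's lemma a maximal $p$-independent set $X$ exists; set $B=\langle X\rangle$. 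Conditions (i) and (ii) hold by construction, so everything comes down to (iii), that $G/B$ is $p$-divisible. If it were not, one would choose $g\in G$ with $g+B\notin p(G/B)$ and, after replacing $g$ by $g-b$ for a suitable $b\in B$ — chosen so that $\langle g\rangle\cap B$ is as small as possible and, crucially, so that $B+\langle g\rangle$ is again $p$-pure in $G$ — adjoin $g$ to $X$, contradicting maximality and so proving (iii). The hard part will be precisely this last verification: that after the correction the enlarged subgroup $B+\langle g\rangle$ remains $p$-pure in $G$; this is the technical heart of \cite[Theorem 32.3]{FuchsAbelvol1} and is carried out by a finite descent on the $p$-height of $g$ modulo $B$.
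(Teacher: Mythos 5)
The paper does not prove this lemma at all: both parts are quoted verbatim from Fuchs, \cite[Theorems 32.3 and 27.5]{FuchsAbelvol1}, and are used as black boxes throughout. So the only meaningful comparison is with the classical proofs you are reconstructing. For part (b) your argument is correct and complete, and it is a genuinely different route from Fuchs's: he argues elementarily (reduction to $p$-groups and induction on the exponent of $H$, essentially the sketch you give in parentheses), whereas you derive the splitting from the stronger statement that bounded groups are pure-injective (algebraically compact). Your verification of that stronger statement is sound: $nH=0$ forces only finitely many isomorphism types $\Z/p^{j}\Z$ of cyclic summands, each homogeneous component is injective over the self-injective Noetherian ring $\Z/p^{j}\Z$ and hence splits off the corresponding (compact, hence algebraically compact) product, and finite direct sums of algebraically compact groups are algebraically compact. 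What this buys is a conceptual one-line conclusion (purity of $H$ in $G$ plus pure-injectivity of $H$ gives a retraction $G\to H$) at the cost of importing the machinery of algebraic compactness; Fuchs's induction is longer but self-contained.

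For part (a) the Zorn's lemma framework is the right one and is essentially Fuchs's, and your verifications that the empty set qualifies and that $p$-purity and directness pass to directed unions are correct. But the decisive step --- showing that maximality of $X$ forces $G/B$ to be $p$-divisible --- is exactly the content of the theorem, and you leave it as an acknowledged black box (``after replacing $g$ by $g-b$ for a suitable $b\in B$ \dots\ chosen so that $B+\langle g\rangle$ is again $p$-pure''). Nothing in your setup would fail, but as written the proposal does not prove (a): the construction of the corrected element $g'$ (choosing a coset representative of $p$-height $0$ modulo $pG+B$, adjusting it so that $\langle g'\rangle\cap B=0$ and so that the relevant height equations $p^{k}G\cap(B\oplus\langle g'\rangle)=p^{k}(B\oplus\langle g'\rangle)$ persist) is where all the work lies, and it is precisely the part you defer back to \cite[Theorem 32.3]{FuchsAbelvol1}. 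Since the paper itself only cites Fuchs here, this is acceptable as a pointer, but it is not a proof of (a).
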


For $q\neq p$ $q$-basic subgroups of $p$-groups are 0, so only $p$-basic subgroups of $p$-groups may be nontrivial. Therefore they are usually called simply basic subgroups. Clearly basic subgroups of $p$-groups are pure.
Subgroups  of the group of the rational integers $\Q$ are called  \emph{rational}  groups. Let $A$ be a uniform  group. Then, it is easy to see that, either  $A$ is isomorphic to a rational group or $A\cong \Z_{p^n}$, for some $p \in P$ and $n \in \Z^+$. For a torsion-free group $G$, we shall denote the (torsion-free) rank (=uniform dimension) of $G$ by $r_0(G)$ (see, \cite{FuchsAbelvol1}). By \cite[page 86, Ex. 3]{FuchsAbelvol1}, $r_0(G)=r_0(H) + r_0(G/H)$ for each subgroup $H$ of $G$.
A torsion-free group $G$ is said to be \emph{completely decomposable} if $G= \oplus _{i \in I}K_i$, where $I$ is an index set and each $K_i$ is isomorphic to a rational group, i.e. $r_0(K_i)=1$ for each $i \in I$.

\section{poor abelian groups}

In this section we give a characterization of poor groups. In \cite{poorSergio}, the authors prove that the  group $\oplus_{p \in P} \Z_p$ is poor. The following result shows that, this group is crucial in investigation of poor groups.

\begin{theorem}\label{poorabelian groups} A group is poor if and only if its torsion part has a direct summand isomorphic to $\oplus_{p \in P} \Z_p$.
\end{theorem}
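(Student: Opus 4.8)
The plan is to prove both implications by translating the poorness condition into statements about injectivity domains and then exploiting the fact that $\oplus_{p\in P}\Z_p$ is already known to be poor. For the "if" direction, suppose $T(G) = B \oplus C$ with $B \cong \oplus_{p\in P}\Z_p$. The key observation is that a direct summand of $M$ has injectivity domain containing that of $M$, so it suffices to show that $\mathfrak{In}^{-1}(G) \subseteq \mathfrak{In}^{-1}(\oplus_{p\in P}\Z_p)$, which combined with the known poorness of $\oplus_{p\in P}\Z_p$ forces $\mathfrak{In}^{-1}(G)$ to consist only of semisimple groups. Since $B$ is pure and bounded in $T(G)$? — no, $\oplus_{p\in P}\Z_p$ is not bounded, so I instead argue directly: if $G$ is $N$-injective then so is any direct summand of $G$, hence so is $B \cong \oplus_{p\in P}\Z_p$; but then $N$ must be semisimple by poorness of that group. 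So the "if" direction is essentially immediate once one knows $\oplus_{p\in P}\Z_p$ is poor and that injectivity relative to $N$ passes to direct summands.

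For the "only if" direction, assume $G$ is poor; I must locate a copy of $\oplus_{p\in P}\Z_p$ as a direct summand of $T(G)$. Write $G = R \oplus D$ with $D$ divisible (hence injective) and $R$ reduced. Since $D$ is injective, $\mathfrak{In}^{-1}(G) = \mathfrak{In}^{-1}(R)$, so $R$ is poor as well; thus I may assume $G$ is reduced. Next I would show the $p$-primary component $T_p(G)$ must contain $\Z_p$ as a direct summand for every prime $p$. The idea: a reduced $p$-group has a basic subgroup $B_p$ which is a direct sum of cyclic $p$-groups and is pure in $T_p(G)$. If $T_p(G)$ (equivalently, its basic subgroup) has bounded $p$-socle contributions only up to some exponent — more precisely, if $\Z_p$ is not a summand of $B_p$, then $B_p$ (and one checks $T_p(G)$ itself, or at least a suitable summand) is such that $G$ is $\Z_p$-injective, or more usefully $G$ is $\Z_{p^k}$-injective for all $k$ while failing to be injective relative to some non-semisimple group — the contrapositive being that poorness forces each $T_p(G)$ to have $\Z_p$ as a summand. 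The precise lever is: if no $T_p(G)$-summand is $\cong \Z_p$ for some fixed $p$, then every cyclic $p$-group in a basic subgroup has bounded order, and one produces a non-semisimple group $N$ (e.g. $\Z_{p^2}$ or $\Z$) relative to which $G$ is injective, contradicting poorness.

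Once each $T_p(G)$ has a summand isomorphic to $\Z_p$, write $T_p(G) = \Z_p \oplus C_p$, so $T(G) = \oplus_{p\in P} T_p(G) = \left(\oplus_{p\in P}\Z_p\right) \oplus \left(\oplus_{p\in P} C_p\right)$, giving the desired summand $\oplus_{p\in P}\Z_p$ of $T(G)$. The main obstacle I anticipate is the heart of the "only if" direction: showing rigorously that failure of $\Z_p$ to split off $T_p(G)$ — or more delicately, handling the torsion-free part $R/T(R)$ and the possibility that $T_p(G)=0$ for some $p$ — allows the construction of an explicit non-semisimple module in $\mathfrak{In}^{-1}(G)$. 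This will require a careful case analysis: if $T_p(G)=0$ or is bounded, one shows $G$ is injective relative to $\Z_{p^\infty}$ or to $\Z$, or relative to a suitable cyclic group, none of which are semisimple; the bookkeeping of which relative-injectivity one can extract, and checking it genuinely holds (via the structure of pure and basic subgroups, and Lemma (b) on pure bounded subgroups splitting off), is where the real work lies.
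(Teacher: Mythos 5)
Your overall strategy is the same as the paper's (extract $\Z_p$ summands, use the fact that relative injectivity passes to direct summands, and test against $\Z$ and $\Z_{p^n}$), but there are two genuine gaps. In the ``if'' direction you pass from ``$G$ is $N$-injective'' to ``$B\cong\oplus_{p\in P}\Z_p$ is $N$-injective'' on the grounds that relative injectivity descends to direct summands. But the hypothesis only makes $B$ a direct summand of $T(G)$, and neither $B$ nor $T(G)$ need be a direct summand of $G$; you noticed that $B$ is not bounded and then sidestepped the difficulty instead of resolving it. The paper's fix is to extract one prime at a time: a summand $V\cong\Z_p$ of $T(G)$ is pure in $T(G)$, hence pure in $G$ (as $T(G)$ is pure in $G$), and being bounded it is a direct summand of $G$ by \cite[Theorem 27.5]{FuchsAbelvol1}. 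Given a non-semisimple $N$, a single such $\Z_p$ suffices: $N$ contains an element $a$ of infinite order or of order $p^n$ with $n>1$, and $\Z_p$ is neither $\Z$-injective (the map $p\Z\to\Z_p$, $p\mapsto 1$, does not extend) nor $\Z_{p^n}$-injective (the subgroup $\langle p^{n-1}\rangle\cong\Z_p$ is not a summand).

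The more serious gap is that the core of the ``only if'' direction is announced rather than proved: you say that if $\Z_p$ does not split off $T_p(G)$ then ``one produces a non-semisimple group relative to which $G$ is injective,'' and you explicitly defer the bookkeeping --- but that bookkeeping is the content of the theorem. The paper's argument is short and does not need basic subgroups: if $T_p(G)=0$ then $G$ is $N$-injective for every $p$-group $N$, e.g.\ $N=\Z_{p^2}$; and if every element of order $p$ in $G$ is divisible by $p$, then $G$ is $\Z_{p^2}$-injective, because the unique nontrivial proper subgroup of $\Z_{p^2}$ is $p\Z_{p^2}\cong\Z_p$ and any homomorphism $f\colon p\Z_{p^2}\to G$ satisfies $f(p)=py$ for some $y\in G$ with $p^2y=0$, so extends. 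Poorness therefore yields an element $a_p$ of order $p$ not divisible by $p$; $\langle a_p\rangle$ is then a pure bounded subgroup of $T_p(G)$, hence a direct summand, and $\oplus_{p\in P}\langle a_p\rangle$ is the required summand of $T(G)$. Your preliminary reduction $G=R\oplus D$ with $D$ divisible is correct but unnecessary.
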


\begin{proof} To prove the necessity, let $G$ be a poor group and let $p$ be any prime. If $T_p(G)= 0,$ then $G$ is $N$-injective for every $p$-group $N$, therefore $T_p(G) \neq 0$. If every element of order $p$ of $G$ is divisible by $p$, then $G$ is $\Z _{p^2}$-injective since $\Z_{p^2}$ has only one nontrivial subgroup: $p\Z_{p^2}$. So there is at least one element $a_p$ with $|a_p|=p$, that is not divisible by $p$. Then the cyclic group $<a_p>$ is a $p$-pure subgroup of $T_p(G)$, therefore a pure subgroup of $T_p(G)$. Since bounded pure subgroups are direct summands, $<a_p>$ is a direct summand of $T_p(G)$. Hence $\oplus_{p \in P}<a_p>$ is a direct summand of $\oplus_{p \in P}T_p(G)=T(G)$. Clearly $\oplus_{p \in P}<a_p> \cong \oplus_{p \in P} \Z_p$.

Conversely suppose that $T(G)$ contains a direct summand isomorphic to $\oplus \Z_p$. Let $V$ be a direct summand of $T(G)$ such that $V \cong \Z _p$. Then $V$ is pure in $G$, because $T(G)$ is pure in $G$. So $V$ is a direct summand in $G$ by  \cite[Theorem 27.5]{FuchsAbelvol1}. This implies, for each prime $p$, $G$ contains a  direct summand isomorphic to $\Z_p$.  Now, suppose $G$ is $N$-injective for some group $N$. Then $\Z_p$ is $N$-injective for each prime $p$. Suppose that $N$ is not semisimple (not elementary in terminology of \cite{FuchsAbelvol1}). Then there is an element $a$ of infinite order or with $o(a) = p^{n}$ where $p$ is a prime and $n> 1$. In first case $\langle a \rangle = \mathbb{Z}$ and in second case $\langle a \rangle = \mathbb{Z}_{p^{n}}$. So $\mathbb{Z}_{p}$ must be $\mathbb{Z}$- injective or $\mathbb{Z}_{p^{n}}$-injective by \cite[Proposition 1.4]{ContinuousAndDiscreteModules}. But the homomorphism $f: p\mathbb{Z} \rightarrow \mathbb{Z}_{p}$ with $f(p) = 1$ cannot be extended to $g: \mathbb{Z} \rightarrow \mathbb{Z}_{p}$ since otherwise $1= f(p) = g(p) = pg(1)= 0$ and  $\mathbb{Z}_{p}$ is isomorphic to the subgroup $\langle p^{n-1} \rangle$ of $\mathbb{Z}_{p^{n}}$ which is not a direct summand of $\mathbb{Z}_{p^{n}}$. So in both cases we get a contradiction, that is $N$ is semisimple.
  \end{proof}

The following is a consequence of Theorem \ref{poorabelian groups}.

\begin{corollary} For a group $G$, the following are equivalent.
\begin{enumerate}
\item[(1)] $G$ is poor.
\item[(2)] The reduced part of $G$ is poor.
\item[(3)] $T(G)$ is poor.
\item[(4)] For each prime $p$, $G$ has a direct summand isomorphic to $\Z_p$.

\end{enumerate}
\end{corollary}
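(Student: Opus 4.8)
The plan is to derive all four equivalences from Theorem~\ref{poorabelian groups} by tracking how the condition ``$T(G)$ has a direct summand isomorphic to $\oplus_{p\in P}\Z_p$'' behaves under passage to the torsion part and to the reduced part. The key observation is that this condition is phrased entirely in terms of $T(G)$, and moreover, since $\oplus_{p\in P}\Z_p$ is torsion, the divisible part of $G$ plays no role.

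First I would establish the equivalence of (1) and (4). Theorem~\ref{poorabelian groups} says $G$ is poor iff $T(G)$ contains a direct summand $\cong\oplus_{p\in P}\Z_p$. If such a summand exists, then for each prime $p$ the corresponding copy of $\Z_p$ is a direct summand of $T(G)$, hence pure in $G$ (as $T(G)$ is pure in $G$), hence a direct summand of $G$ by Lemma~(b)/\cite[Theorem 27.5]{FuchsAbelvol1}; this gives (4). Conversely, if for each $p$ the group $G$ has a direct summand $V_p\cong\Z_p$, then $V_p\subseteq T(G)$, and $V_p$ is a direct summand of $G$ hence of $T(G)$; taking the internal sum $\sum_{p\in P}V_p$, which is direct because the $V_p$ lie in distinct primary components, one checks it is a direct summand of $T(G)=\oplus_{p\in P}T_p(G)$ (split off $V_p$ inside each $T_p(G)$ separately), and it is isomorphic to $\oplus_{p\in P}\Z_p$; by Theorem~\ref{poorabelian groups} this yields (1). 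This is essentially a repackaging of the argument already given in the proof of the theorem.

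Next I would show (1)$\Leftrightarrow$(2) and (1)$\Leftrightarrow$(3). Write $G=N\oplus D$ with $N$ reduced and $D$ divisible. Since $D$ is divisible and $\oplus_{p\in P}\Z_p$ is reduced, one has $T(D)$ divisible, so any direct summand of $T(G)=T(N)\oplus T(D)$ isomorphic to the reduced group $\oplus_{p\in P}\Z_p$ must already be, up to the decomposition, a summand of $T(N)$; more carefully, a cyclic summand $\Z_p$ of $T(G)$ projects to a cyclic subgroup of $T(N)$ that is still a summand because the $T(D)$-component, being divisible and torsion and cyclic-generated-image, must vanish. Thus $T(G)$ has a summand $\cong\oplus_{p\in P}\Z_p$ iff $T(N)$ does, i.e. (1)$\Leftrightarrow$(2) via Theorem~\ref{poorabelian groups} applied to $N$. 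For (3), note $T(T(G))=T(G)$, so Theorem~\ref{poorabelian groups} applied to the group $T(G)$ says $T(G)$ is poor iff $T(G)$ has a direct summand $\cong\oplus_{p\in P}\Z_p$, which is literally the condition for $G$ to be poor.

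The main obstacle is the bookkeeping in the (4)$\Rightarrow$(1) direction: one must check that finitely-or-infinitely many copies of $\Z_p$ (one per prime) that are individually direct summands of $G$ can be assembled into a single direct summand of $T(G)$ isomorphic to $\oplus_{p\in P}\Z_p$. This is handled by working primary component by primary component, where it reduces to the elementary fact that a pure bounded (indeed cyclic of prime order) subgroup is a direct summand (Lemma~(b)), and then taking the direct sum over $p$; since the construction is internal to $T(G)=\bigoplus_{p\in P}T_p(G)$ and the pieces live in distinct summands $T_p(G)$, no compatibility issues arise. Everything else is a direct citation of Theorem~\ref{poorabelian groups} and the purity of $T(G)$ and $T_p(G)$ in $G$.
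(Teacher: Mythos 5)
Your overall strategy---reading all four conditions off the criterion of Theorem~\ref{poorabelian groups} and checking that this criterion depends only on $T(G)$, respectively only on the reduced part---is exactly how the paper treats this corollary (it is stated there without proof, as an immediate consequence of the theorem), and your arguments for $(1)\Leftrightarrow(3)$ and $(1)\Leftrightarrow(4)$ are correct.

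However, in your argument for $(1)\Rightarrow(2)$ one step is false as stated: you claim that a direct summand $V\cong\Z_p$ of $T(G)=T(N)\oplus T(D)$ must have vanishing $T(D)$-component because that component is ``divisible and torsion and cyclic-generated-image.'' A homomorphic image of $\Z_p$ sitting inside a divisible group need not be divisible, hence need not vanish: in $G=\Z_p\oplus\Z_{p^\infty}$ the element $a=(1,c)$, with $c$ of order $p$ in $\Z_{p^\infty}$, generates a pure bounded (hence direct) summand $\langle a\rangle\cong\Z_p$ of $T(G)$ whose projection to the divisible part is nonzero. The conclusion you want is nevertheless true, and the repair is short: a summand $\langle a\rangle\cong\Z_p$ of $T(G)$ forces $a$ to have order $p$ and $p$-height $0$; writing $a=n+d$ with $n\in T_p(N)$ and $d\in T_p(D)$, divisibility of $T_p(D)$ gives $d=pd'$, so $n\neq 0$ and $n$ is not divisible by $p$ in $T_p(N)$ (otherwise $a$ would have positive height); hence $\langle n\rangle$ is a pure bounded subgroup of $T(N)$ and therefore a direct summand of $T(N)$ isomorphic to $\Z_p$. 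With that substitution, assembling these summands over all primes as in your $(4)\Rightarrow(1)$ step completes the proof.
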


\section{pi-poor abelian groups}

In \cite{pi-poorSergio}, the authors investigate the notion of pi-poor module and study properties of these modules over various rings. In particular they give some classes of groups that are not pi-poor  and point out that they do not know whether a pi-poor  group exists or not. In this section we shall prove that pi-poor  groups exist.

\begin{theorem}\label{pi-poor exist}Let $\{A_{\gamma}   | \gamma \in \Gamma \}$ be a complete set of representatives of isomorphism classes of uniform groups. Then the group  $$M= \bigoplus _{\gamma \in \Gamma} A_{\gamma}^{(\mathbb{N})}$$ is  pi-poor.

\end{theorem}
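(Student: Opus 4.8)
The plan is to show that if $M$ is $N$-pure-injective for some group $N$, then every pure submodule of $N$ is a direct summand, i.e.\ $N$ is pure-split. The key observation is that $M$ contains a copy of $A^{(\mathbb{N})}$ as a direct summand for \emph{every} uniform group $A$ (both the torsion uniform groups $\Z_{p^n}$ and all rational groups), so $N$-pure-injectivity of $M$ forces $A^{(\mathbb{N})}$, and hence $A$ itself, to be $N$-pure-injective for every uniform $A$. First I would recall the structural fact that a group $N$ is pure-split precisely when it decomposes appropriately; more usefully, I would aim to prove that the pure-injectivity of $M$ relative to $N$ severely restricts $N$: it should force $N$ to be a direct sum of a divisible group and a bounded group (equivalently, $N$ has bounded reduced part), and such groups are pure-split because pure submodules of bounded groups are direct summands by \cite[Theorem 27.5]{FuchsAbelvol1} (Lemma (b) above), while divisible groups are injective hence pure-injective.

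The heart of the argument is to rule out, for $N$ in the injectivity-type domain of $M$, the ``bad'' pure submodules. First I would handle the torsion obstructions: if some $p$-primary component $T_p(N)$ were unbounded, then $N$ would have a pure (hence with bounded ambient pieces, by a basic-subgroup argument) subgroup built from cyclic $p$-groups of unbounded order, and one shows $\Z_p$ (which is a summand of $M$ via $A_\gamma = \Z_p$) cannot be $N$-pure-injective, by exhibiting a pure embedding of a direct sum $\bigoplus_n \Z_{p^{k_n}}$ with $k_n \to \infty$ into $N$ and a map to $\Z_p$ that does not extend — this is essentially the classical fact that $\Z_p$ is not pure-injective relative to such a group because the completion in the $p$-adic topology is strictly larger. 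Next I would handle torsion-free obstructions: if $N$ has a torsion-free part that is not divisible, I would produce a rational group $A$ (a suitable subgroup of $\Q$, namely one realizing the relevant type/height sequence appearing in $N$) that sits purely inside $N$ but for which $A^{(\mathbb{N})}$, and hence the $A$-copy sitting in $M$, fails to be $N$-pure-injective. The uniform groups are exactly the rational groups and the $\Z_{p^n}$, which is why $M$, assembled from all of them, is ``rich enough'' to detect every failure of pure-splitness in $N$.

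Concretely, the steps in order: (1) Note $A^{(\mathbb{N})}$ is a direct summand of $M$ for each uniform $A$, and a direct summand of a pure-injective-relative-to-$N$ module is again pure-injective relative to $N$; so the hypothesis gives us that $A^{(\mathbb{N})}$ is $N$-pure-injective for all uniform $A$. (2) Deduce that for each prime $p$, $T_p(N)$ is bounded: otherwise build a countable pure subgroup $H = \bigoplus_{n} \Z_{p^{k_n}}$ of $N$ with $k_n$ strictly increasing (using that unbounded $p$-components contain such pure subgroups, via basic subgroups), and check $\Z_p^{(\mathbb{N})}$ is not $H$-pure-injective, contradicting $N$-pure-injectivity since $H$ is pure in $N$. (3) Deduce that the reduced part of $N$ is torsion: if not, the torsion-free reduced part is nonzero and not divisible, so some rank-one pure subgroup $A \subseteq N$ with $A \cong$ a rational non-divisible group exists, and one shows $A^{(\mathbb{N})}$ is not $A^{(\mathbb{N})}$-pure-injective (a rank-one reduced torsion-free group is not pure-injective, its pure-injective hull being strictly bigger), hence not $N$-pure-injective. (4) Conclude $N = D \oplus B$ with $D$ divisible and $B$ bounded, since the reduced part is a torsion group whose $p$-components are all bounded and almost all zero — wait, that last point needs care: a reduced torsion group with every $p$-component bounded need not be bounded. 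I would therefore refine step (2)/(4) to additionally force all but finitely many $T_p(N)$ to be zero and the reduced torsion part to be genuinely bounded, again by a pure-injectivity obstruction using the appropriate $\Z_{p^n}$-summands of $M$ and the fact that an infinite direct sum $\bigoplus_p \Z_p$ is not pure-injective. (5) Once $N = D \oplus B$ with $D$ divisible (hence injective, hence pure-split) and $B$ bounded (hence pure-split by Lemma (b)), conclude $N$ is pure-split.

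I expect the main obstacle to be step (3) together with the refinement in step (4): controlling the torsion-free reduced part of $N$ and the ``spread'' of its torsion part requires constructing explicit pure subgroups realizing prescribed height sequences/types and verifying non-extendability of maps into the relevant uniform groups — this is where the completeness of $\{A_\gamma\}$ as representatives of \emph{all} uniform groups is genuinely used, and where the pure-injective hull computations for rank-one torsion-free groups (which are the $p$-adic-type completions) must be invoked carefully. The torsion case (step (2)) is more routine, being the classical obstruction that $\Z_p$ is not pure-injective over $\bigoplus \Z_{p^{k_n}}$.
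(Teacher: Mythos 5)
Your overall strategy fails because the structure you aim to establish for $N$ is strictly narrower than the actual pure-injectivity domain of $M$, so steps (3) and (4) are attempts to prove false statements. The group $\Z$ lies in the domain ($\Z$ has no proper nonzero pure subgroups, so it is trivially pure-split and every group is $\Z$-pure-injective), yet its reduced part is torsion-free and nonzero; this kills step (3). Likewise $\oplus_{p\in P}\Z_p$ lies in the domain (each pure subgroup decomposes along primary components and is a summand), yet it is reduced, torsion and unbounded; this kills the refinement in step (4). A reduced torsion group with each $T_p$ bounded is already pure-split without being bounded, and a reduced torsion-free group in the domain need not vanish --- the paper shows it is homogeneous completely decomposable of finite rank. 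Your inferences in step (3) also conflate absolute with relative pure-injectivity: that a rank-one rational group $A$ is not pure-injective does not prevent it (or $A^{(\mathbb{N})}$) from being $N$-pure-injective for the particular $N$ at hand, and the single pure copy of $A$ in $N$ gives you no pure copy of $A^{(\mathbb{N})}$ in $N$.

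Step (2) is also broken as stated: $\Z_p^{(\mathbb{N})}=(\Z/p\Z)^{(\mathbb{N})}$ is bounded, hence pure-injective, hence $H$-pure-injective for \emph{every} $H$, so it can detect nothing; you appear to be thinking of the $p$-adic integers rather than $\Z/p\Z$. The correct detector (used in the paper's Lemma 4.3) is an unbounded direct sum of cyclic $p$-groups: one takes a pure epimorphism $g\colon B_p\to\Z_{p^\infty}$ from a basic subgroup, notes that $K=\Ker(g)$ is again a direct sum of cyclic $p$-groups and hence isomorphic to a direct summand of $M$ (this is where having $\Z_{p^n}^{(\mathbb{N})}$ for all $n$ matters), and derives a contradiction with reducedness. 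Finally, even after the correct restrictions on $T(N)$ and $N/T(N)$ are in place, the technical heart of the paper's proof is the mixed case: showing that $T(N)$ is a direct summand of $N$, via the isomorphism $\Ext(\oplus_i K_i,T(N))\cong\prod_i\Ext(K_i,T(N))$, reduction to a countable pure subgroup $L$, and the splitting of $T(L)$ in $L$. Your proposal does not address how the torsion and torsion-free parts of $N$ fit together at all, and without that the conclusion that $N$ is pure-split does not follow from the pure-splitness of the two pieces.
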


Before proving the theorem, we will first give some lemmas. Throughout this section $M$ denotes  the group given in Theorem \ref{pi-poor exist}.

The following result is well known. We include it for completeness.

\begin{lemma} Let $R$ be a ring and $L$, $N$ be right $R$-modules. Let $K$ be a pure submodule of $N$. If $L$ is $N$-pure-injective, then $L$ is both $K$-pure-injective and $N/K$-pure-injective.
\end{lemma}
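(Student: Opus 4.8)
The plan is to reduce both assertions to the universal property of pure-injectivity by a diagram-chasing argument, using the transitivity properties of purity recorded in Section 2. For the $K$-pure-injective claim, let $L' \subseteq K$ be a pure submodule of $K$ and let $f\colon L' \to L$ be a homomorphism. The first step is to observe that $L'$ is then pure in $N$: since purity is transitive (a pure submodule of a pure submodule is pure), $L' \subseteq K \subseteq N$ with both inclusions pure gives $L'$ pure in $N$. Now apply the hypothesis that $L$ is $N$-pure-injective to the pure submodule $L' \subseteq N$ and the map $f$, obtaining an extension $g\colon N \to L$ with $g|_{L'} = f$. Restricting $g$ to $K$ yields the desired extension $g|_K \colon K \to L$ of $f$, so $L$ is $K$-pure-injective.

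For the $N/K$-pure-injective claim, let $\pi\colon N \to N/K$ be the canonical projection, let $\overline{L}$ be a pure submodule of $N/K$, and let $f\colon \overline{L} \to L$ be a homomorphism. Set $L^* = \pi^{-1}(\overline{L}) \subseteq N$. The key step here is to check that $L^*$ is pure in $N$: this holds because $K = \pi^{-1}(0)$ is pure in $N$ and $\overline{L}$ is pure in $N/K$, and a submodule $L^*$ containing the pure submodule $K$ with $L^*/K$ pure in $N/K$ is itself pure in $N$ (this is the standard ``pure in pure-over-pure'' fact for purity filtrations). Having $L^*$ pure in $N$, consider the composite $f \circ (\pi|_{L^*})\colon L^* \to \overline{L} \to L$; apply $N$-pure-injectivity of $L$ to get $g\colon N \to L$ extending it. Since $g$ kills $K$ (because $f\circ \pi|_{L^*}$ does, as $K \subseteq \ker(\pi|_{L^*})$), $g$ factors through $\pi$ as $g = \overline{g}\circ\pi$ for a unique $\overline{g}\colon N/K \to L$, and one checks $\overline{g}|_{\overline{L}} = f$ by unwinding the definitions on elements of $L^*$ mapping onto $\overline{L}$.

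The main obstacle—if any—is purely bookkeeping: one must verify carefully the two transitivity statements for purity, namely that purity composes (pure in pure is pure) and that if $K \subseteq L^* \subseteq N$ with $K$ pure in $N$ and $L^*/K$ pure in $N/K$ then $L^*$ is pure in $N$. Both follow routinely from the definition $nA = A \cap nN$ for all integers $n$ by a short computation, or can simply be cited; they are entirely standard. Once these are in hand, the rest is the universal-property diagram chase above, with no further subtlety.
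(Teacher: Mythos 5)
Your proposal is correct and follows essentially the same route as the paper's own proof: for the first claim, transitivity of purity plus restriction of the extension to $K$; for the second, pulling the pure submodule back to $X=\pi^{-1}(\overline{L})$, noting $X$ is pure in $N$ since $K$ is pure in $N$ and $X/K$ is pure in $N/K$, extending over $N$, and factoring through $N/K$ because the extension vanishes on $K$. Both transitivity facts you flag are used (and likewise left as standard) in the paper, so there is nothing to add.
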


\begin{proof} Let $A$ be a pure submodule of $K$ and $f: A \to L$ be a homomorphism. Then $A$ is pure in $N$, and so $f$ extends to a map $g: N \to L$. It is clear that $g | _K: K \to L$ is an extension of $f$ to $K$. Hence $L$ is $K$-pure-injective.
Now, let $X/K$ be a pure submodule of $N/K$ and $f: X/K \to L$ be a homomorphism. Since $K$ is pure in $N$ and $X/K$ is pure in $N/K$, $X$ is pure in $N$. Therefore there is a homomorphism $g: N \to L$ such that $f\pi ' =gi$, where $i: X \to N$ is the inclusion and $\pi ' :X \to X/K$ is the usual epimorphism.
Since $g(K)=0$, $\Ker (\pi) \subseteq \Ker (g)$, where $\pi : N\to N/K$ is the usual epimorphism. Therefore there is a homomorphism $h: N/K \to L$ such that $h\pi =g$. Then for each $x \in X$, $h(x+K)=h (\pi (x))=g(x)=(f \pi')(x)=f(x+K)$. That is, $h$ extends $f$. Hence $L$ is $N/K$-pure-injective.

\end{proof}

\begin{lemma}\label{torsion pure inj} Let $G$ be a reduced torsion  group. The following are equivalent.

\begin{enumerate}
\item[(1)] $M$ is $G$-pure-injective.
\item[(2)] $T_{p}(G)$ is bounded for each $p \in P$.
\item[(3)] $G$ is pure-split.

\end{enumerate}
\end{lemma}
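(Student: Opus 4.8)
The plan is to establish the cycle $(1)\Rightarrow(2)\Rightarrow(3)\Rightarrow(1)$. The implication $(3)\Rightarrow(1)$ is immediate: if $G$ is pure-split then every pure submodule of $G$ is a direct summand, so any homomorphism from a pure submodule of $G$ into $M$ extends to $G$ by projection, hence $M$ is $G$-pure-injective. For $(2)\Rightarrow(3)$, write $G=\bigoplus_{p\in P}T_p(G)$; since each $T_p(G)$ is bounded it is a direct sum of cyclic $p$-groups by \cite[Theorem 17.2]{FuchsAbelvol1}, and a pure submodule of $G$ restricts to a pure submodule of each $T_p(G)$. So it suffices to show that a bounded group is pure-split. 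If $L$ is pure in the bounded group $G$, then $L$ is itself bounded, hence pure and bounded in $G$, so by Lemma (b) (= \cite[Theorem 27.5]{FuchsAbelvol1}) $L$ is a direct summand; thus $G$ is pure-split.

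The substantive implication is $(1)\Rightarrow(2)$, which I would prove by contraposition: assume some $T_p(G)$ is unbounded and produce a pure submodule of $G$ together with a homomorphism into $M$ that does not extend. Fix such a prime $p$. Since $T_p(G)$ is an unbounded reduced $p$-group, a basic subgroup $B$ of $T_p(G)$ is unbounded; $B$ is pure in $T_p(G)$, hence pure in $G$ (as $T_p(G)$ is pure in $G$). Being an unbounded direct sum of cyclic $p$-groups, $B$ has a direct summand of the form $C=\bigoplus_{n\in\mathbb{N}}\Z_{p^{k_n}}$ with $k_n\to\infty$; decompose $B=C\oplus B'$. Now build a homomorphism $f\colon C\to \Z_{p^\infty}$ (the Prüfer group) which is surjective — for instance by sending a generator of the $n$-th summand $\Z_{p^{k_n}}$ to an element of order $p^{k_n}$ chosen compatibly, so that the image is all of $\Z_{p^\infty}$; then $f$ does not extend to any homomorphism on a pure overgroup in which the images of the generators become $p$-divisible, because $\Z_{p^\infty}$ is not a summand... but here is the catch: $\Z_{p^\infty}$ is injective, so it IS a summand everywhere, and such an $f$ would extend. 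The correct target must be a reduced group. So instead I would map $C$ onto a suitable reduced $p$-group: define $f\colon C\to P:=\prod_{n}\Z_{p^{k_n}}/\bigoplus_n\Z_{p^{k_n}}$-type object, or more simply exploit that the inclusion $C\hookrightarrow \widehat{C}$ (the $p$-adic/bounded-torsion completion, equivalently the torsion-completion of $C$) is pure with $C$ not a summand; take $f=\mathrm{id}_C\colon C\to C\subseteq M$ (note $C$ is a direct sum of uniform groups $\Z_{p^{k_n}}$, hence embeds as a direct summand of $M$), and show that if $f$ extended to all of $G$ then, composing with the projection coming from purity one would force $C$ to split off a pure-essential extension inside $G$. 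Making this precise is the heart of the argument, and the key mechanism I expect to use is: an extension of $f=\mathrm{id}_C$ to a pure overgroup $H\supseteq C$ with $H/C$ divisible would be a retraction $H\to C$, forcing $H=C\oplus(\text{divisible})$, contradicting reducedness once we arrange $H$ to be a reduced pure essential extension of $C$ sitting inside $G$ — and such an $H$ exists precisely because $T_p(G)$ is unbounded (its basic subgroup is a proper pure dense subgroup).

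The main obstacle, then, is the construction in $(1)\Rightarrow(2)$: one must locate, inside the unbounded reduced $p$-group $T_p(G)$, a pure subgroup $C$ that is a direct sum of cyclics (so that it is $M$-"visible", being a sum of uniform groups) together with a strictly larger pure reduced subgroup $H$ of $G$ with $C$ essential in $H$; then $\mathrm{id}_C\colon C\to C\hookrightarrow M$ cannot extend to $H$ (hence not to $G$), since any extension would be a splitting $H\cong C\oplus H/C$ with $H/C$ a nonzero torsion group that is divisible-over-$C$ in the relevant sense, contradicting either reducedness of $H$ or essentiality of $C$ in $H$. I would handle this by taking $C$ to be an unbounded summand of a basic subgroup $B$ of $T_p(G)$ and $H$ to be the pure subgroup of $T_p(G)$ generated by $C$ together with a single element of $T_p(G)$ of height $\geq k_n$ for all $n$ witnessing that $B\neq T_p(G)$; verifying that this $H$ is pure in $G$, reduced, has $C$ essential in it, and that $H/C$ is $p$-divisible (so no retraction $H\to C$ exists) are the routine but essential checks. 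Everything else reduces to the cited structure theorems for bounded groups and for pure bounded subgroups.
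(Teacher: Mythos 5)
Your implications $(3)\Rightarrow(1)$ and $(2)\Rightarrow(3)$ are fine and match the paper (reduce to the $p$-components, which are bounded, and apply \cite[Theorem 27.5]{FuchsAbelvol1} to the bounded pure subgroup $T_p(H)$). The problem is $(1)\Rightarrow(2)$, which is the substantive implication and which your proposal never actually completes. Your final concrete suggestion --- take $H$ to be the pure subgroup generated by an unbounded summand $C$ of a basic subgroup together with ``a single element of $T_p(G)$ of height $\geq k_n$ for all $n$'' --- fails because such an element need not exist: an unbounded reduced $p$-group can be separable, i.e.\ have no nonzero elements of infinite height. The group $\oplus_{n=1}^{\infty}\Z_{p^n}$ is exactly such an example (and it is the crucial test case, since it is not pure-split), so your construction produces no $H$ there and the contradiction never materializes. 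Your earlier instinct about $\Z_{p^{\infty}}$ was also a wrong turn for a different reason: the Pr\"ufer group is not meant to be the codomain of the map you fail to extend, so its injectivity is irrelevant.

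The construction that works (and is what the paper does) is to use $\Z_{p^{\infty}}$ as a \emph{quotient}, not a target. If the basic subgroup $B=B_p(G)$ of $T_p(G)$ is unbounded, then, being a direct sum of cyclic $p$-groups of unbounded orders, it admits a pure epimorphism $g\colon B\to \Z_{p^{\infty}}$ whose kernel $K$ is again a direct sum of cyclic $p$-groups (this is the construction of \cite[Lemma 30.1]{FuchsAbelvol1}). Now $K$ is isomorphic to a direct summand of $M$; since $M$ is $G$-pure-injective and $B$ is pure in $G$, $M$ --- and hence its summand $K$ --- is $B$-pure-injective. As $K$ is pure in $B$, the identity of $K$ extends to a retraction $B\to K$, so the pure exact sequence $0\to K\to B\to\Z_{p^{\infty}}\to 0$ splits and $B\cong K\oplus\Z_{p^{\infty}}$, contradicting the reducedness of $B$. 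Hence $B$ is bounded, so it is a pure bounded, hence direct, summand of $T_p(G)$ with divisible quotient $T_p(G)/B$, and reducedness forces $T_p(G)=B$ to be bounded. Your proposal contains all the ingredients (basic subgroups, purity, the summand-of-$M$ observation, the retraction mechanism) but assembles them around a nonexistent element of infinite height rather than around the pure presentation of $\Z_{p^{\infty}}$, so as written the key implication has a genuine gap.
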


\begin{proof} $(1) \Rightarrow (2)$ Write $G= \oplus_{p \in P} T_p(G)$.  Let $B_p(G)$ be a basic subgroup of $T_p(G)$.  Then $B_p(G)$ is pure in $T_p (G)$, and so in $G$ and $T_p(G)/B_p(G)$ is divisible. We claim that, $B_p(G)$ is bounded. Suppose the contrary that $B_p(G)$ is not bounded. Then for every positive integer $n$, $B_p(G)$ contains an element of order $p^n$. In this case, since $B_p(G)$ is a direct sum of cyclic $p$-groups, there is an epimorphism $$B_p(G) \overset{g}{\to} \Z_{p^{\infty}} \to 0,$$ where the restrictions of $g$ to  the cyclic summands of $B_p(G)$ are monic. It can be proved as in \cite[Lemma 30.1]{FuchsAbelvol1} that  $g$ is a pure epimorphism, i.e. $K=\Ker (g)$ is a pure submodule of $B_p(G)$.  Now $K$ is pure in $B_p(G)$ and is a direct sum of cyclic $p$-groups. Since $M$ contains a direct summand isomorphic to $K$, and $B_p(G)$ is a pure subgroup of $G$,  $K$ is  $B_p(G)$-pure-injective. Therefore $B_p(G) \cong K \oplus \Z_{p^{\infty}}$. This contradicts with the fact that $B_p(G)$ is reduced. Hence $B_p(G)$ is bounded, and so $B_p(G)$ is a direct summand of $G$. The fact that $G$ is reduced and $T_p(G)/B_p(G)$ divisible implies that $B_p(G)=T_p(G)$.

$(2) \Rightarrow (3)$ Let $H$ be a pure subgroup of $G$. Since $G= \oplus_{p \in P} T_p(G)$  and  $H=\oplus_{p \in P} T_p(H)$, $T_p(H)$ is a pure subgroups of $T_p(G)$. Then $T_p(H)$ is a direct summand of $T_p(G)$ by \cite[Theorem 27.5]{FuchsAbelvol1}. Let $T_p(G)=T_p(H)\oplus N_p$, where $N_p \leq G$. Then $G=\oplus_{p \in P}[T_p(H)\oplus N_p]=(\oplus_{p \in P} T_p(H))\oplus (\oplus_{p \in P} N_p)=H \oplus(\oplus_{p \in P} N_p)$. Hence $G$ is pure-split.

$(3) \Rightarrow (1)$ Clear by the definition.
\end{proof}

\begin{remark}Pure-split  groups are completely  characterized  in \cite{CompletelyDecomposableGroups}. The implications $(2)\Leftrightarrow (3)$ in Lemma \ref{torsion pure inj} also can be found in \cite{CompletelyDecomposableGroups}.

\end{remark}

\begin{lemma}\label{lemma:torsion-puresplitting}Let $B$ be a $p$-group. Suppose that $M$ is $B$-pure-injective. Then $B$ is pure-split.
\end{lemma}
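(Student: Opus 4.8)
The plan is to reduce the claim about an arbitrary $p$-group $B$ to the case of a reduced $p$-group, so that Lemma~\ref{torsion pure inj} applies. First I would decompose $B=D\oplus R$ where $D$ is divisible and $R$ is reduced; since $\Z$ is noetherian this largest divisible subgroup exists. As $R$ is a direct summand of $B$, it is in particular a pure submodule, and then by the first lemma above (relative pure-injectivity passes to pure submodules) $M$ is $R$-pure-injective. Now $R$ is a reduced torsion (indeed $p$-) group, so Lemma~\ref{torsion pure inj} gives that $T_p(R)=R$ is bounded, hence $R$ is pure-split. The divisible part $D$ is a direct sum of copies of $\Z_{p^\infty}$, which is pure-injective, so every pure submodule of $D$ is a direct summand and $D$ is pure-split as well.

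The remaining work is to assemble pure-splitness of $B=D\oplus R$ from pure-splitness of $D$ and $R$. Here I would use that a pure subgroup $H$ of $B=D\oplus R$ decomposes compatibly: the divisible part $T(H)$'s divisible subgroup $D_H$ is a divisible, hence pure (and injective in the category of $p$-groups if we are careful), so $D_H$ is a direct summand of $H$ and indeed of $B$, and one can arrange $D_H\subseteq D$ up to an automorphism of $B$. Then $H=D_H\oplus H'$ with $H'$ reduced; because $D$ is divisible and $H$ is pure, the image of $H'$ in $R=B/D$ is pure in $R$, and one checks $H'$ itself is pure in $R$ (a reduced pure subgroup of $B$ maps isomorphically and purely into the reduced quotient). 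Since $R$ is bounded, $H'$ is pure and bounded in $R$, hence a direct summand of $R$ by Lemma~(b). Combining, $H=D_H\oplus H'$ is a direct summand of $D\oplus R=B$.

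Alternatively — and this is probably cleaner to write — I would avoid the explicit splitting-off of the divisible part by invoking the known characterization referenced in the Remark: pure-split groups are exactly those whose reduced part has bounded $p$-components. But since the paper seems to want a self-contained argument, I would present the direct approach: $M$ being $B$-pure-injective forces $M$ to be $R$-pure-injective for the reduced summand $R$, Lemma~\ref{torsion pure inj} forces $R$ bounded, and then an elementary argument (divisible summand splits off, bounded pure subgroups of the reduced part split off by Lemma~(b)) shows $B$ is pure-split.

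The main obstacle I anticipate is the bookkeeping in the last step: showing that an arbitrary pure subgroup $H$ of $B=D\oplus R$ can be written as $D_H\oplus H'$ with $D_H$ a direct summand of $B$ and $H'$ a pure subgroup of (a copy of) $R$. One must be careful that $H$'s divisible part need not literally sit inside $D$, only up to an automorphism of $B$; and one must verify that $H'$, which is pure in $B$, is actually pure in $R$ — this uses that $H'$ is reduced and $D$ is divisible, so $H'\cap D=0$ and the projection $B\to R$ restricts to a pure embedding of $H'$. Everything else (the reduction via the first lemma, the application of Lemma~\ref{torsion pure inj}, and Lemma~(b)) is routine.
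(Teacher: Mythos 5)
Your overall strategy coincides with the paper's: split the divisible part off a pure subgroup $A$ of $B$, show the reduced complement is bounded and pure, and apply \cite[Theorem 27.5]{FuchsAbelvol1} to split that off as well. However, the step you yourself single out as the main obstacle is justified by a false principle. You claim $H'\cap D=0$ ``uses that $H'$ is reduced and $D$ is divisible''; but a reduced subgroup can meet (indeed lie inside) a divisible group --- e.g.\ the cyclic subgroup of order $p$ in $\Z_{p^{\infty}}$ --- so reducedness alone gives nothing. Purity of $H'$ in $B$ only tells you that every element of $H'\cap D$ has infinite $p$-height in $H'$, and a reduced $p$-group may well contain nonzero elements of infinite height, so purity plus reducedness still does not yield $H'\cap D=0$. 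The conclusion is true here, but only once you know $H'$ is \emph{bounded}; in your write-up boundedness of $H'$ is deduced from embedding $H'$ into the bounded group $R$, which is precisely what required $H'\cap D=0$ in the first place. As structured, the argument is circular at this point. (A second, minor point: your worry that $D_H$ might not literally sit inside $D$ is unfounded --- $D$ is the largest divisible subgroup of $B$, so it contains every divisible subgroup, in particular $D_H$.)

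The repair is exactly what the paper does, and it also eliminates the projection onto $R$ entirely. Since $D_A\leq D$, write $D=D_1\oplus D_A$ and set $E=C\oplus D_1$, so that $B=E\oplus D_A$; the modular law gives $A=(E\cap A)\oplus D_A$ with $L=E\cap A$ already contained in $E$. Now apply Lemma~\ref{torsion pure inj} \emph{directly to $L$}: $L$ is a direct summand of the pure subgroup $A$, hence pure in $B$, so $M$ is $L$-pure-injective, and $L\cong A/D_A$ is reduced, whence $L$ is bounded. A bounded pure subgroup of $E$ is a direct summand, $E=K\oplus L$, and therefore $B=K\oplus L\oplus D_A=K\oplus A$. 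Note that this route never needs the reduced part $C$ of $B$ to be bounded and never needs to transfer purity through the quotient $B/D$; once you reorder the argument so that boundedness of the reduced complement comes from Lemma~\ref{torsion pure inj} rather than from an embedding into $R$, your proof becomes the paper's.
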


\begin{proof} Let $D$ be the divisible subgroup of $B$ and $A$ be a pure subgroup of $B$. Then  $B=C\oplus D$ for some reduced group $C$. Let $D_A$ be the divisible subgroup of $A$. Then $D_A\leq D$, and $D=D_1\oplus D_A$ for some $D_1\leq D$. So $B=C\oplus D_1\oplus D_A=E\oplus D_A$ where $E=C\oplus D_1$. By modular Law $A=(E\cap A)\oplus D_A$. Then  $L=E\cap A$ is a pure submodule of $B$. Hence $M$ is $L$-pure-injective, and $L\cong A/D_A$ is reduced. Therefore $L$ is bounded by Lemma \ref{torsion pure inj}.  Since $L$ is pure in $B$, $L$ is also pure in $E$. Then $E=K\oplus L$ for some $K\leq E$ by \cite[Theorem 27.5]{FuchsAbelvol1}. Then $B=E\oplus D_A=K\oplus L\oplus D_A=K\oplus A$. So $A$ is a direct summand in $B$. Hence $B$ is pure-split.
\end{proof}

\begin{lemma}\label{lemma:reduced torsionfreepure split} If $N$ is a reduced torsion-free group such that $M$ is $N$-pure-injective then $N$ is pure-split. Moreover, $N$ is completely decomposable with finite rank.
\end{lemma}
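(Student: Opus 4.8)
The plan is to first reduce to a rank question and then use the basic-subgroup machinery, exactly parallel to the torsion case in Lemma \ref{torsion pure inj}. First I would show that $r_0(N)$ is finite. Pick a prime $p$ and a $p$-basic subgroup $B$ of $N$; since $N$ is torsion-free, $B$ is a free group, say $B \cong \Z^{(\Lambda)}$, it is $p$-pure in $N$ (hence pure, being a direct sum of infinite cyclic groups all of whose torsion is trivial — more precisely, for torsion-free $N$, $p$-purity of a free subgroup with $p$-divisible quotient upgrades to purity, which I would justify by checking $q$-divisibility of $N/B$ for $q\neq p$ using that $N/B$ is $p$-divisible torsion-free of the appropriate rank, or simply invoke that a $p$-basic subgroup that is free is pure when $N$ is torsion-free). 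Suppose $r_0(N)$, and hence $|\Lambda|$, is infinite. Then $B$ surjects purely onto $\Q$: choose a countable subset of the free basis, map it onto a generating set of $\Q$ realizing $\Q$ as a pure-epimorphic image of $\Z^{(\N)}$ (the same computation as \cite[Lemma 30.1]{FuchsAbelvol1} used in Lemma \ref{torsion pure inj}), and send the remaining basis elements to $0$; the kernel $K$ is pure in $B$ and is free. Since $M$ contains a copy of every rational group as a direct summand — in particular $\Q$ and every free group of at most the rank of $N$ — $M$ is $K$-pure-injective, hence $K$ is $B$-pure-injective (Lemma, $K$ pure in $B$ pure in $N$). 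Thus $B \cong K \oplus \Q$, contradicting that $B$ is free (free groups are reduced). Hence $r_0(N) = n < \infty$.

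Next, with $n = r_0(N)$ finite, I would argue $N$ itself is completely decomposable of rank $n$, from which pure-split follows. One route: a $p$-basic subgroup $B$ of $N$ now has finite rank $\le n$, and since $N/B$ is $p$-divisible and torsion-free, comparing ranks forces $r_0(B) = n = r_0(N)$ at every prime after noting $N$ is $p$-divisible iff... — actually the cleanest approach is to show $N$ is $p$-divisible for all but finitely many $p$ and handle the finitely many "bad" primes by hand. Concretely: for each $p$, $M$ is $N$-pure-injective and $N$ has finite rank; I would exhibit that if $N$ were not completely decomposable one could find inside $N$ a pure subgroup $L$ of rank $\le n$ which is a rational group $R\subseteq \Q$ that is \emph{not} a direct summand of some rank-two pure subgroup of $N$, while $M$ being $R$-pure-injective (as $R$ embeds as a summand of $M$) would force splitting — mirroring the $\Z_p \hookrightarrow \Z_{p^n}$ obstruction from Theorem \ref{poorabelian groups}. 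Alternatively, and more robustly, invoke that a torsion-free group all of whose finite-rank pure subgroups are completely decomposable and which has finite rank is completely decomposable, then observe that finite-rank completely decomposable groups are pure-split because any pure subgroup is again finite-rank completely decomposable and the inclusion of a rank-one pure (hence "type-homogeneous in the relevant component") summand splits off one rational summand at a time by a rank induction.

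The main obstacle I expect is the second half: passing from "$r_0(N)$ finite" to "$N$ completely decomposable." Finiteness of rank alone does not give complete decomposability for torsion-free groups in general (there are rank-two indecomposable torsion-free groups), so the argument must genuinely use $M$-pure-injectivity to rule those out. The key leverage is that $M$ contains, as direct summands, \emph{every} rational group and $\Z_{p^k}$ for all $p,k$; so for any pure subgroup $L\subseteq N$ that happens to be a rational group, $M$ is $L$-pure-injective and $L$ splits off wherever it sits purely. The technical heart is therefore: show that a reduced finite-rank torsion-free group that is not completely decomposable must contain a pure rank-one subgroup $L$ together with a pure subgroup $N'$ with $L\subseteq N'$, $L$ pure in $N'$, but $L$ not a summand of $N'$ — and then derive a contradiction with $L$-pure-injectivity of $M$. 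I would isolate this as the crux and, if a direct construction is awkward, fall back on the structure theory of finite-rank torsion-free groups (completely decomposable groups are exactly the pure-split ones among finite-rank torsion-free groups, cf. \cite{CompletelyDecomposableGroups}) to close the gap, together with an induction on rank peeling off one rational summand at a time.
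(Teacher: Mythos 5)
There is a genuine gap in the first, and crucial, step of your argument: the claim that a $p$-basic subgroup $B$ of a reduced torsion-free group $N$ is pure in $N$ is false. A $p$-basic subgroup is only $p$-pure with $p$-divisible quotient; nothing forces $q$-purity for $q\neq p$. Concretely, $\Z$ is a $p$-basic subgroup of the rational group $\Q_p=\Z_{(p)}$ (it is $p$-pure, and $\Q_p/\Z\cong\oplus_{q\neq p}\Z_{q^\infty}$ is divisible), yet $\Z$ is not pure in $\Q_p$ since $1\in\Z\cap q\Q_p$ but $1\notin q\Z$ for any prime $q\neq p$. The same failure occurs in $\Z_{(p)}^{(\N)}$, which is exactly the kind of infinite-rank group your contradiction argument must handle. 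Without purity of $B$ in $N$ you cannot pass from ``$M$ is $N$-pure-injective'' to ``$M$ is $B$-pure-injective,'' so the splitting $B\cong K\oplus\Q$ is not available and the proof that $r_0(N)<\infty$ collapses. A secondary inaccuracy: even granting purity, your kernel $K$ is free of possibly uncountable rank, and such a group is \emph{not} a direct summand of $M$ ($M$ contains each uniform group only with multiplicity $\aleph_0$); you would need to first cut down to a countable-rank pure piece.

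The paper avoids basic subgroups entirely: it peels off the pure subgroup $G_1$ \emph{purely generated} by an element $a_1$ (its purification), which is genuinely pure and of rank one, hence a rational group and a summand of $M$, so it splits off; iterating, if the process did not stop one would get a pure subgroup $\oplus_{i=1}^\infty G_i$ (pure as a direct limit of pure subgroups), a pure epimorphism $\oplus_i G_i\to\Q$ sending $a_i\mapsto 1/i$, and — after a Zorn's lemma argument showing the countable pure kernel is completely decomposable, hence a summand of $M$ — the contradiction $\oplus_i G_i\cong K\oplus\Q$ with reducedness. The second half of your proposal is closer to the mark: the ``fallback'' you describe (every nonzero pure subgroup contains a pure rank-one subgroup, which is a rational group, hence a summand of $M$, hence splits off; induct on rank) is precisely the paper's mechanism, and it makes the worry about rank-two indecomposable groups moot — they simply cannot occur as pure subgroups of $N$ here. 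So the second part is recoverable, but the rank-finiteness step needs to be redone along the purification/peeling lines rather than via free $p$-basic subgroups.
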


\begin{proof} Take any $0\neq a_1\in N$ and let $G_1=\{x\in N \,| \, mx\in \langle a_1\rangle\, \text{for some} ~0\neq m\in\mathbb{Z}\}$ (that is $G_1$ is the subgroup purely generated by $a_1$). Clearly $G_1$ is a pure subgroup of $N$ and isomorphic to a rational group, so $M$ has a direct summand isomorphic to $G_1$. Therefore $G_1$ is a direct summand of $N$, that is $N=G_1\oplus N_1$ for some $N_1\leq N$. If $N_1\neq 0$ we can find in similar way a pure subgroup $G_2$ of $N_1$ purely generated by an element $a_2$. Clearly $M$ is $N_1$-pure-injective, so $N_1=G_2\oplus N_2$. The same can be done for $N_2$ if $N_2\neq 0$ and so on. If this process continuous infinitely then $N$ contains a subgroup $\oplus_{i=1}^\infty G_i$ which is pure as a direct limit of pure subgroups. Therefore $M$ is $\oplus_{i=1}^\infty G_i$-pure-injective. For each $a_i$, $i= 1,2,\ldots$ there is a homomorphism $f_i: \langle a_i\rangle \rightarrow \mathbb{Q}$ with $f(a_i)=\frac{1}{i}$. Since $\mathbb{Q}$ is injective there is a homomorphism $f:\oplus_{i=1}^{\infty} G_i\rightarrow \mathbb{Q}$ with $f(a_i)=f_i(a_i)=\frac{1}{i}$. Clearly $f$ is an epimorphism. Since $\mathbb{Q}$ is torsion-free, $K=\Ker(f)$ is a pure subgroup of $\oplus_{i=1}^\infty G_i$. Let $\Gamma$ be the set of all completely decomposable pure subgroups of $K$ and $R$ be the set of all subgroups of $K$ of rank $1$. Define order $\preceq$ on $\Gamma$ as follows: $\oplus_{S\in I}S\preceq\oplus_{S\in J}S$ if $I\subseteq J\subseteq R$. If $P$ is any chain in $\Gamma$, then $\cup_{X\in P}X$ is clearly a completely decomposable and pure subgroup of $K$, since the direct limit of pure subgroups is pure. So by Zorn's Lemma there is a maximal element $B=\oplus_{S\in T}S$ in $\Gamma$. Since $K$ is countable $T$ is also countable, so $B$ is a direct summand of $K$, that is $K=B\oplus C$ for some $C\leq K$. If $C\neq 0$ then as at the beginning of the proof we can find a pure subgroup of $X$ of $C$ of rank $1$. Clearly $B\oplus X\in \Gamma$. Contradiction with maximality of $B$. So $C=0$. Then $K$ is a direct summand of $\oplus_{i=1}^\infty G_i$. So $\oplus_{i=1}^\infty G_i \cong K\oplus \mathbb{Q}$. But $\oplus_{i=1}^\infty G_i$ is reduced. Contradiction. Thus the process must be finite, that is $N=G_1\oplus G_2\oplus\cdots \oplus G_n$ for some $n\in \mathbb{Z}^{+}$. To show that $N$ is pure-split let $L$ be a pure subgroup of $N$. Then $M$ is $L$-pure-injective, so it is the direct sum of groups of rank one of finite number as we have proved above. Then $L$ is a direct summand of $N$, because  $N$-pure-injectiveness of $M$ implies that the inclusion $L \to N$ is splitting. Hence $N$ is pure-split and completely decomposable with finite rank. This completes the proof.
\end{proof}

\begin{lemma}\label{lemma:torsionFree pure splitting} Let $N$ be a torsion-free group. If $M$ is $N$-pure-injective, then $N$ is pure-split.
\end{lemma}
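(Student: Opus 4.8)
The plan is to reduce the general torsion-free case to the reduced torsion-free case already handled in Lemma \ref{lemma:reduced torsionfreepure split}. First I would split off the divisible part: write $N = D \oplus N'$, where $D$ is the (torsion-free) divisible subgroup of $N$ and $N'$ is reduced. Since $N'$ is a direct summand, it is pure in $N$, so by the first lemma of this section $M$ is $N'$-pure-injective, and hence $N'$ is pure-split and completely decomposable of finite rank by Lemma \ref{lemma:reduced torsionfreepure split}. The divisible torsion-free group $D$ is a direct sum of copies of $\mathbb{Q}$, and every pure subgroup of a divisible group is itself divisible, hence a direct summand; so $D$ is trivially pure-split. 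The remaining work is to combine these two facts into pure-splitness of $N = D \oplus N'$.

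The key step is: given a pure subgroup $L \le N = D \oplus N'$, produce a complement. Let $D_L$ be the divisible subgroup of $L$; since $L$ is pure in $N$ and $N/D$ is reduced torsion-free, $D_L \le D$, and divisibility of $D$ gives $D = D_L \oplus D_1$ for some $D_1$. Then $N = D_L \oplus D_1 \oplus N'$, and by the modular law applied inside $D_L \oplus (D_1 \oplus N')$ we get $L = D_L \oplus (L \cap (D_1 \oplus N'))$; set $L' = L \cap (D_1 \oplus N')$. The subgroup $L'$ is reduced (a complement in $L$ to its divisible part) and pure in $N$, hence pure in the reduced torsion-free group $E := D_1 \oplus N'$. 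Since $E$ is a direct summand of $N$ it is pure in $N$, so $M$ is $E$-pure-injective, and $E$ is again reduced torsion-free; thus by Lemma \ref{lemma:reduced torsionfreepure split} (or directly, since $L'$ is itself reduced torsion-free with $M$ being $L'$-pure-injective, hence completely decomposable of finite rank and the inclusion $L' \to E$ splits because $M$ is $E$-pure-injective) $L'$ is a direct summand of $E$, say $E = L' \oplus K$. Then $N = D_L \oplus E = D_L \oplus L' \oplus K = L \oplus K$, so $L$ is a direct summand of $N$. Hence $N$ is pure-split.

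I do not expect a serious obstacle here; the argument is essentially the same bookkeeping with the divisible part as in Lemma \ref{lemma:torsion-puresplitting}, with the role of bounded pure subgroups replaced by the finite-rank completely decomposable pure subgroups supplied by Lemma \ref{lemma:reduced torsionfreepure split}. The one point that needs care is the claim $D_L \le D$: this uses that $L$ is \emph{pure} in $N$ so that its divisible part maps into the divisible part of $N$ (equivalently, $L \cap D$ is pure in $L$ and $L/(L\cap D) \hookrightarrow N/D$ is reduced torsion-free, forcing the divisible subgroup of $L$ to sit inside $L \cap D \le D$). Once that is in place, the modular-law decomposition and the appeal to Lemma \ref{lemma:reduced torsionfreepure split} finish the proof.
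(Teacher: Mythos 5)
Your argument is essentially the paper's: decompose the pure subgroup as (divisible part) $\oplus$ (reduced part) via the modular law, apply Lemma \ref{lemma:reduced torsionfreepure split} to the reduced piece to see it is a finite direct sum of rational groups, and split its inclusion using that $M$ contains an isomorphic direct summand. One misstatement: $E = D_1 \oplus N'$ is \emph{not} reduced when $D_1 \neq 0$, so you cannot invoke Lemma \ref{lemma:reduced torsionfreepure split} to say $E$ is pure-split; but your parenthetical alternative --- $L'$ itself is reduced torsion-free and pure in $N$, hence completely decomposable of finite rank, hence isomorphic to a direct summand of $M$, so the inclusion $L' \to E$ splits by $E$-pure-injectivity of $M$ --- is exactly the paper's route and closes that gap, after which $N = D_L \oplus L' \oplus K = L \oplus K$ as you say.
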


\begin{proof} Let $K$ be a pure subgroup of $N=A \oplus D$, where $D$ is the divisible subgroup of $N$. Let $D_K$ be the divisible subgroup of $K$. Then $D_K \leq D$, and so $D=D_1\oplus D_K$ for some $D_1\leq D$. So  $N=A\oplus D_1\oplus D_K=E\oplus D_K$, where $E=A\oplus D_1$. By modular law, $K=(E\cap K)\oplus D_K$. Denote $E\cap K=L$. Then $L\cong K/D_K$ is reduced and pure in $N$. Hence $M$ is $L$-pure-injective, and so  $L \cong \oplus _{i=1}^n R_i$ for some rational groups $R_1,\ldots R_n$, by Lemma \ref{lemma:reduced torsionfreepure split}. Then $M$ contains a direct summand isomorphic to $L$. So the inclusion $L \to N$ splits, i.e. $N=L \oplus H$ for some $H \leq N$. Since $L$ is reduced, $D_K \leq H$. Then $N=L \oplus D_K \oplus H'=K \oplus H'$. This implies that $N$ is pure-split.
\end{proof}

\begin{definition}(See, \cite{FuchsVol.2}) Let $G$ be a torsion-free group and  $a \in G$. Given a prime $p$, the largest integer $k$ such that $p^k | a$ holds is called the \emph{p-height} $h_p(a)$ of $a$; if no such maximal integer $k$ exists, then we set $h_p(a)=\infty$. The sequence of $p$-heights $$\chi(a)=(h_{p_1}(a), h_{p_2}(a),\cdots , h_{p_n}(a),\cdots)$$is said to be the characteristic of $a$. Two characteristics $(k_1,k_2,\cdots)$ and $(l_1,l_2,\cdots)$ are \emph{equivalent} if $k_n \neq l_n$ holds only for a finite number of $n$ such that in case $k_n \neq l_n$ both $k_n$ and $l_n$ are finite. An equivalence class of characteristics is called a \emph{type}. $G$ is called \emph{homogeneous} if all nonzero elements of $G$ are of the same type.
\end{definition}

\begin{corollary} Let $N$ be a torsion-free reduced group. The following are equivalent.

\begin{enumerate}
\item[(1)] $M$ is $N$-pure-injective.
\item[(2)] $N$ is pure-split.

\item[(3)] $N$ is a completely decomposable homogeneous group of finite rank.
\end{enumerate}
 \end{corollary}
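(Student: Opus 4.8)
The plan is to observe that almost all the work has already been carried out in the preceding lemmas, so that the corollary reduces to assembling them together with the known classification of pure-split torsion-free groups.

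First, $(1)\Rightarrow(2)$ is immediate from Lemma~\ref{lemma:torsionFree pure splitting} applied to the torsion-free group $N$; since $N$ is moreover reduced, one may instead invoke Lemma~\ref{lemma:reduced torsionfreepure split}, which additionally records that $N$ is completely decomposable of finite rank, say $N=R_1\oplus\cdots\oplus R_n$ with each $R_i$ a rational group. Next, $(2)\Rightarrow(1)$ is a triviality that in fact holds with $M$ replaced by an arbitrary group: if $N$ is pure-split and $L$ is a pure subgroup of $N$, then $N=L\oplus H$ for some $H\leq N$, and any homomorphism $f\colon L\to M$ extends to $N$ by composing $f$ with the projection $N\to L$; hence $M$ is $N$-pure-injective.

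It remains to bring in $(3)$. Here I would appeal to the classification of pure-split groups from \cite{CompletelyDecomposableGroups}, already recalled in the Remark following Lemma~\ref{torsion pure inj}: a reduced torsion-free group is pure-split if and only if it is completely decomposable, homogeneous, and of finite rank. Combined with the two items above, this gives $(2)\Leftrightarrow(3)$ and closes the loop. Put differently, after Lemma~\ref{lemma:reduced torsionfreepure split} the only missing point is homogeneity, i.e.\ that all the rational summands $R_i$ share a common type; since none of the lemmas of this section ever compares the types of the summands, this is the sole place where a genuinely structural input is needed.

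Accordingly, the main obstacle is not internal to this corollary at all: it is entirely externalized into the structure theory of \cite{CompletelyDecomposableGroups}. If one wished instead to argue for homogeneity directly, the hard step would be to show that an inhomogeneous completely decomposable group of rank two, $N=R_1\oplus R_2$ with $\operatorname{type}(R_1)\neq\operatorname{type}(R_2)$, admits a pure subgroup that is not a direct summand — constructed from a homomorphism exploiting a prime $p$ at which the $p$-height functions of $R_1$ and $R_2$ differ — which would contradict the pure-splitness already obtained, and one would first reduce to the rank-two case by passing to the direct summand $R_1\oplus R_2$ of $N$ (a pure subgroup, hence one over which $M$ is still pure-injective).
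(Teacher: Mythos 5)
Your proposal is correct and follows essentially the same route as the paper: the equivalence $(1)\Leftrightarrow(2)$ is exactly Lemma~\ref{lemma:reduced torsionfreepure split} (with the easy direction $(2)\Rightarrow(1)$ being immediate from the definitions, as you note), and $(2)\Leftrightarrow(3)$ is delegated to the classification of pure-split groups in \cite{CompletelyDecomposableGroups}, precisely as the paper does. The closing sketch of how one might prove homogeneity directly is a reasonable aside but is not needed, since the cited classification carries that load.
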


\begin{proof} $(1) \Leftrightarrow (2) $ By Lemma \ref{lemma:reduced torsionfreepure split}.

$(2) \Leftrightarrow (3)$ See \cite{CompletelyDecomposableGroups} or \cite[Ex.8, page 116]{Fuchs Vol.2}.

\end{proof}

Now we can prove our theorem.

\vspace{0.5cm}

\noindent {\bf Proof of Theorem \ref{pi-poor exist}.} Let $M$ be $G$-pure-injective for some group $G$. We have $G=N \oplus D$ for some reduced group $N$ and a divisible group $D$. Then  $M$ is $N$-pure-injective, and since $T(N)$ is a pure subgroup of $N,$ $M$ is $T(N)$-pure-injective and $M$ is $N/T(N)$-pure-injective. Then, by Lemma \ref{torsion pure inj} and Lemma \ref{lemma:reduced torsionfreepure split}, $T(N)=\oplus_{p \in P}B_p(N)$ and $N/T(N)=\oplus_{i \in I}K_i$, where for  each $p \in P$, $B_p(N)$ is a bounded $p$-group, $I$ is a finite index set and each $K_i$ is isomorphic to a rational group.  We claim that $T(N)$ is a direct summand in $N$, that is, the short exact sequence $$\mathbb{E}: 0 \to T(N) \to N \to N/T(N)\to 0$$is splitting. By \cite[Theorem 52.2]{FuchsAbelvol1} there is a natural isomorphism $$\Ext(N/T(N), T(N))=\Ext(\bigoplus _{i\in I}K_i, T(N)) \cong \prod_{i \in I} \Ext (K_i, T(N)) $$ induced by the inclusions $\alpha _j :K_j \to \oplus_{i \in I}K_i$. Therefore it is sufficient to prove that each short exact sequence $$\mathbb{E} \alpha _ j: 0 \to T(N) \to N' \overset{f} \to K_j \to 0$$ is splitting. We have the following commutative diagram with exact columns and rows.

$$
\begin{xy}
  \xymatrix{
   & &  & 0 \ar[d] & 0 \ar[d] & & \\
 &\mathbb{E}: 0 \ar[r] & T(N) \ar@{=}[d] \ar[r] & N' \ar[d] \ar[r]^{f} & K_j \ar[d]^{\alpha _j} \ar[r] & 0 & \\
   &\mathbb{E} \alpha _ j: 0 \ar[r] & T(N) \ar[r] & N\ar[r] \ar[d] & \oplus_{i \in I}K_i \ar[r]\ar[d] & 0 &\\
   &  &   &  \oplus_{i \neq j}K_i \ar@{=}[r] \ar[d]  & \oplus_{i \neq j}K_i \ar[d] &
   &\\
   & &  & 0 & 0 & &
 }
\end{xy}
$$
Since $\oplus_{i \in I}K_i$ is torsion-free, $N'$ is a pure subgroup of $N$, therefore $M$ is $N'$-pure-injective. There is a countable set $\{n_k | k=1,2,\cdots \}$ in $N'$ such that the elements $f(n_k)$ generate $K_j.$ By \cite[Proposition 26.2]{FuchsAbelvol1}, there is a countable pure subgroup $L$ of $N'$ containing the subgroup $\sum_{k=1}^{\infty} \Z n_k.$ Then $M$ is $L$-pure-injective as well. Clearly $f(L)=K_j$ and $\Ker (f| _L)=T(L)$. Since L is countable, $T(L)$ is a countable subgroup of $T(N)$. But $T(N)$ is a direct sum of cyclic primary groups, therefore $T(L)$ is a countable direct sum of  cyclic primary groups and hence is isomorphic to a direct summand of $M$. Since $T(L)$ is a subgroup of $L$ and $M$ is $L$-pure-injective, $T(L)$ is a direct summand of $L$. We have the following commutative diagram with exact rows:

$$
\begin{xy}
  \xymatrix{
 & & &\mathbb{E'}: 0 \ar[r] & T(L)\ar[d]^{\beta} \ar[r] &  L \ar[d]\ar[r] & K _j \ar[r] \ar@{=}[d] & 0 \\
& & &\mathbb{E}\alpha _j : 0 \ar[r] & T(N) \ar[r] & N'\ar[r] & K_j \ar[r] & 0 & &
              }
\end{xy}
$$
where $\beta$ is the inclusion. Since $\mathbb{E'}$ is splitting $\mathbb{E}\alpha _j =\beta \mathbb{E}$ is also splitting. So $N=T(N) \oplus K$, where $T(N)$ and $K$ are groups as in Lemma \ref{torsion pure inj} and Lemma \ref{lemma:reduced torsionfreepure split}, respectively. This proves our claim.

To prove that $G$ is pure-split take a pure subgroup $A$ of $G$. By the first part of the proof, we have $$G=N\oplus D=T(N)\oplus K \oplus T(D) \oplus D'= (T(N) \oplus T(D))\oplus (K \oplus D')= T(G)\oplus G'.$$ Then for each $p \in P$, $T_p(A)$ is a pure subgroup of $T_p(G)$. Therefore $T_p(A)$ is a direct summand of $T_p(G)$ by Lemma \ref{lemma:torsion-puresplitting}. Then $T(A)$ is a direct summand of $T(G)$. We have a homomorphism $f:A/T(A) \to G/T(G)$ defined by $f(a+T(A))=a +T(G)$. If $f(a+T(A))=0$ then $a \in T(G) \cap A=T(A)$, hence $a +T(A)=0$, so $f$ is a monomorphism. Now claim that $Im(f)$ is a pure subgroup of $G/T(G)$. To show this, let $a+T(G)=m(b+T(G))$ for some $a \in A$, $b \in G$, $0 \neq m \in \Z$. Then $a-mb \in T(G)$, therefore $ka=kmb$ for some $0 \neq k \in \Z$. Since $A$ is pure in $G$, $ka=kma'$ for some $a' \in A$. Then $a-ma'\in T(A)$, hence $a+T(A)=m(a'+T(A))$. So $Im(f)$ is pure. Since  $G/T(G) \cong G'$ is pure-split by Lemma \ref{lemma:torsionFree pure splitting}, $f$ is splitting. As $A$ is a pure subgroup of $G$, $M$ is $A$-pure-injective. So again by the first part of the proof $A=T(A) \oplus K'$ for some $K' \leq A$ with $K' \cong A/T(A)$. Then the inclusion map $A=T(A)\oplus K' \to G=T(G)\oplus G'$ is splitting, that is, $A$ is a direct summand in $G$. This completes the proof.

\section{structure of pi-poor abelian groups}

In this section, we prove some results concerning a possible structure of pi-poor groups.

\begin{proposition}\label{prop. pi-poor torsion Tp unbounded} If $G$ is pi-poor group, then $T_p(G)$ is unbounded for each $p \in P$.

\end{proposition}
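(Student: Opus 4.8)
The plan is to argue by contradiction: suppose $G$ is pi-poor but $T_p(G)$ is bounded for some prime $p$. I will then exhibit a group $N$ that is not pure-split yet for which $M_G := G$ is $N$-pure-injective (where $M_G$ here plays the role that $M$ plays in Section 3, i.e.\ the candidate pi-poor group is $G$ itself), contradicting pi-poorness. The natural choice for $N$ is a group built so that its only ``bad'' behaviour is concentrated at the prime $p$; a clean candidate is $N = \Z_{p^\infty}$, or more robustly a reduced unbounded $p$-group such as $\bigoplus_{n\in\N}\Z_{p^n}$ together with its non-splitting pure subgroup (of the kind produced by the pure epimorphism onto $\Z_{p^\infty}$ used in Lemma~\ref{torsion pure inj}). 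In either case $N$ is not pure-split, so it suffices to show $G$ is $N$-pure-injective.

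First I would reduce to understanding pure submodules of $N$ and extension of maps from them into $G$. Since $G = N_0 \oplus D$ with $N_0$ reduced and $D$ divisible, and since $T_p(G)$ bounded forces $T_p(N_0)$ bounded, the torsion-at-$p$ part of $G$ that can ``receive'' maps is bounded; meanwhile $D$ is pure-injective in the relevant sense (divisible groups are injective, hence pure-injective). The key step is then: given a pure submodule $L$ of $N$ and a homomorphism $f\colon L \to G$, extend $f$ to $N$. Because $N$ is a $p$-group, the image $f(L)$ lands inside $T_p(G) = T_p(N_0)\oplus T_p(D)$, and $T_p(D)$ is divisible hence injective, while $T_p(N_0)$ is bounded. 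The heart of the matter is that a bounded $p$-group, being a direct sum of cyclic $p$-groups of bounded order, behaves well with respect to pure-injectivity relative to $p$-groups — essentially because $\Z_{p^k}$ is pure-injective (algebraically compact), so any finite or bounded direct sum of such is pure-injective. Thus $G$ restricted to its $p$-primary relevance is a direct summand of a pure-injective group, and $f$ extends. I expect the clean way to phrase this is: $G$ is $N$-pure-injective for every $p$-group $N$ of bounded $p$-exponent's-complement... — more precisely, one shows $G$ is $N$-pure-injective whenever $T_p(G)$ is bounded and $N$ is an arbitrary $p$-group, by combining injectivity of the divisible part with the algebraic compactness of bounded $p$-groups.

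The main obstacle I anticipate is making the extension argument fully rigorous when $L$ is an infinite pure submodule of $N$ and $f(L)$ meets the unbounded divisible part $T_p(D)=\Z_{p^\infty}$-summands: one must check that the ``bounded piece'' and the ``divisible piece'' of $f$ can be extended separately and then recombined, which requires that $f(L)\cap (\text{divisible part})$ splits off inside $f(L)$ compatibly. This is where I would invoke that in a $p$-group every element of finite height-bounded order sits in a bounded pure (hence summand) subgroup, together with Lemma~\ref{lemma:torsion-puresplitting}-style reasoning, to decompose the situation. Alternatively — and this may be the slicker route — I would cite the known fact that a $p$-group $H$ is pure-injective if and only if $H = B \oplus D$ with $B$ bounded and $D$ divisible; since $T_p(G)$ bounded gives exactly $T_p(G)\oplus T_p(D) = B\oplus D$ of this form, $T_p(G)$-as-a-summand-of-$G$ is pure-injective, so $G$ is $N$-pure-injective for \emph{every} group $N$ whose torsion is concentrated at $p$ with the needed properties, in particular for $N=\Z_{p^\infty}\oplus\bigl(\bigoplus_n\Z_{p^n}\bigr)$, which is not pure-split. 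That contradicts $G$ pi-poor, completing the proof.
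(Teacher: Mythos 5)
Your proposal is correct and follows essentially the same route as the paper: assume $T_p(G)$ is bounded for some $p$, observe that a bounded group is pure-injective so that $T_p(G)$ is a pure-injective direct summand of $G$ and any homomorphism from a pure subgroup of a $p$-group into $G$ lands in $T_p(G)$ and hence extends, and then test against $N=\bigoplus_{n=1}^{\infty}\Z_{p^n}$, which is not pure-split because the pure epimorphism onto $\Z_{p^\infty}$ cannot split for a reduced group. Two small cautions on the hedged parts of your write-up: your first candidate $N=\Z_{p^\infty}$ would not do the job (it is divisible, so its only pure subgroups are $0$ and itself, making it pure-split), and the ``main obstacle'' you anticipate about $f(L)$ meeting an unbounded divisible $p$-part of $G$ is vacuous, since the hypothesis that $T_p(G)$ is bounded already forces $T_p(D)=0$.
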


\begin{proof} Suppose $G$ is pi-poor and  $T_p(G)$ is bounded for some $p \in P$. Then $T_p(G)$ is pure-injective and $T_p(G)$ is a direct summand of $G$, because $T_p(G)$ is also pure in $G$.  Consider the group $\oplus_{n =1} ^{\infty}\Z _{p^n}$. We claim that, $G$ is $\oplus_{n =1} ^{\infty}\Z _{p^n}$-pure-injective. Let $H$ be a pure subgroup of $\oplus_{n =1} ^{\infty}\Z _{p^n}$ and  $f: H \to G$ be a homomorphism. Since $H$ is a $p$-group, $f(H) \subseteq T_p(G).$ So that $f$ extends to a homomorphism $h: \oplus_{n =1} ^{\infty}\Z _{p^n} \to G$, because $T_p(G)$ is pure-injective. This proves our claim.

We shall see that $\oplus_{n =1} ^{\infty}\Z _{p^n}$ is not pure-split. There is an exact sequence $$\mathbb{E}:0\to K \to \oplus_{n =1} ^{\infty}\Z _{p^n} \overset{g} \to \Z_{p^{\infty}}\to 0.$$By the same arguments as in the proof of Lemma \ref{torsion pure inj}, $\mathbb{E}$ is pure, i.e. $K$ is pure in $\oplus_{n =1} ^{\infty}\Z _{p^n}$. Since $\oplus_{n =1} ^{\infty}\Z _{p^n}$ is reduced, $\mathbb{E}$ does not split. Hence $\oplus_{n =1} ^{\infty}\Z _{p^n}$ is not pure-split. This contradicts with the fact that $G$ is pi-poor. Therefore, $T_p(G)$ can not be bounded.
\end{proof}

Let $\Q _p$ be the localization of $\Z$ at the prime ideal $p\Z$. Note that the elements of $\Q _p$ are of the form $ab^{-1}$, where $a, b \in \Z$, $b\neq 0$ and $\gcd (b,p)=1$

\begin{lemma}\label{Qp} Let $p$ be a prime integer and $N$ be a reduced torsion group. Then for every homomorphism $f: \mathbb{Q}_{p} \rightarrow N$, $\Image f$ is bounded.
\end{lemma}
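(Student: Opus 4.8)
The plan is to analyze the structure of $\mathbb{Q}_p$ as a $\mathbb{Z}$-module and exploit its divisibility properties together with the reducedness of $N$. The key observation is that $\mathbb{Q}_p$ is $q$-divisible for every prime $q \neq p$: indeed, if $q \neq p$ then $\gcd(q,p) = 1$, so $q$ is a unit in $\mathbb{Q}_p$ and hence $q\mathbb{Q}_p = \mathbb{Q}_p$. Consequently, for any homomorphism $f \colon \mathbb{Q}_p \to N$, the image $\Image f$ is a $q$-divisible subgroup of $N$ for every prime $q \neq p$.

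Next I would pin down the torsion structure. Since $N$ is torsion, $\Image f = \bigoplus_{q \in P} T_q(\Image f)$, and each $T_q(\Image f)$ is a subgroup of $T_q(N)$. For a prime $q \neq p$, the group $T_q(\Image f)$ is both a $q$-group and $q$-divisible (being a direct summand of the $q$-divisible group $\Image f$, or directly: $q$-divisibility of $\Image f$ passes to its $q$-primary component since the other components are already $q$-divisible). A $q$-divisible $q$-group is divisible, hence zero because $N$ is reduced. Therefore $\Image f = T_p(\Image f)$ is a $p$-group.

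It remains to show this $p$-group $\Image f$ is bounded. Here I would use that $\mathbb{Q}_p$ is generated over $\mathbb{Z}$ by the elements $p^{-k} \cdot 1$... wait, no — $\mathbb{Q}_p$ does not contain $p^{-1}$. Rather, $\mathbb{Q}_p$ is the subring of $\mathbb{Q}$ consisting of fractions with denominator prime to $p$, so $\mathbb{Q}_p$ is a $\mathbb{Z}$-module on which multiplication by $p$ is injective (it is torsion-free) and $\mathbb{Q}_p/p\mathbb{Q}_p \cong \mathbb{Z}_p$. The correct approach: since $\Image f$ is a homomorphic image of $\mathbb{Q}_p$ and $p \cdot \mathbb{Q}_p$ has index $p$ in $\mathbb{Q}_p$ while $\mathbb{Q}_p$ is $q$-divisible for all $q \neq p$, the module $\Image f$ is a $p$-group that is a quotient of $\mathbb{Q}_p$. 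Now $\mathbb{Q}_p$ is a rational group of idempotent type with $h_p = 0$, so every proper quotient... The cleanest finish is: $\Image f \cong \mathbb{Q}_p / C$ for some subgroup $C$. If $C$ has infinite index then $\mathbb{Q}_p/C$ has a free or divisible quotient or contains $\mathbb{Z}(p^\infty)$; one checks that a $p$-group quotient of $\mathbb{Q}_p$ which is reduced must be bounded, since $\mathbb{Q}_p$ has $p$-rank one and any unbounded reduced $p$-quotient would force an unbounded basic subgroup, impossible for a rank-one source modulo the torsion-free kernel. The main obstacle is making this last bounding argument rigorous; I expect to settle it by observing that if $\Image f$ were unbounded it would surject onto $\mathbb{Z}(p^\infty)$ (via a basic subgroup argument as in \cite[Lemma 30.1]{FuchsAbelvol1}), but then $\mathbb{Q}_p$ surjects onto $\mathbb{Z}(p^\infty)$, which is impossible because $\mathbb{Q}_p/p\mathbb{Q}_p \cong \mathbb{Z}_p$ forces any map $\mathbb{Q}_p \to \mathbb{Z}(p^\infty)$ to have image inside $p\mathbb{Z}(p^\infty) = \mathbb{Z}(p^\infty)$ — hmm, that needs care, so I would instead use that $\mathbb{Z}(p^\infty)$ is divisible while any nonzero quotient of $\mathbb{Q}_p$ has $p$-rank one, and a divisible $p$-group quotient would be $\mathbb{Z}(p^\infty)$ itself, contradicting that $\mathbb{Z}(p^\infty)$ is not a homomorphic image of the rank-one non-$p$-divisible group $\mathbb{Q}_p$ (as $\Hom(\mathbb{Q}_p, \mathbb{Z}(p^\infty)) \cong \mathbb{Z}(p^\infty)$ but the image of each such map is finite). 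That final computation of $\Hom(\mathbb{Q}_p, \mathbb{Z}(p^\infty))$ is the technical heart of the argument.
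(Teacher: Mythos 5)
Your first step --- that $\Image f$ lands in $T_p(N)$, because $\mathbb{Q}_p$ is $q$-divisible for every prime $q\neq p$ and a $q$-divisible subgroup of the reduced group $T_q(N)$ must vanish --- is correct and is exactly how the paper begins. The gap is in the second step. You reduce boundedness of the reduced $p$-group $\Image f$ to the assertion that no homomorphism $\mathbb{Q}_p\to \mathbb{Z}_{p^\infty}$ is surjective, equivalently that every such homomorphism has finite image, and you explicitly leave that assertion unproved, calling it ``the technical heart.'' But that assertion \emph{is} the content of the lemma (specialized to the target $\mathbb{Z}_{p^\infty}$): nothing in your write-up establishes it, and the one concrete reason you first offer (about images landing in $p\mathbb{Z}_{p^\infty}$) you correctly retract yourself. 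As it stands you have traded ``every map from $\mathbb{Q}_p$ to a reduced torsion group has bounded image'' for ``every map $\mathbb{Q}_p\to\mathbb{Z}_{p^\infty}$ has finite image'' without proving either, so the argument is incomplete precisely at its key point.

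The missing computation is short, and once you carry it out the detour through basic subgroups and $\mathbb{Z}_{p^\infty}$ becomes unnecessary; this is what the paper does directly. Let $a=f(1)$ with $o(a)=p^n$, and take $bc^{-1}\in\mathbb{Q}_p$ with $\gcd(c,p)=1$. Choose $y,z\in\mathbb{Z}$ with $cy+p^nz=1$, so that $bc^{-1}=by+bp^nzc^{-1}$. The element $x=f(bp^nzc^{-1})$ satisfies $cx=bzp^nf(1)=0$; since $o(x)$ is a power of $p$ and hence coprime to $c$, this forces $x=0$. Therefore $f(bc^{-1})=byf(1)\in\langle f(1)\rangle$, so $\Image f=\langle f(1)\rangle$ is cyclic of order $p^n$, in particular bounded. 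The same two lines, applied with target $\mathbb{Z}_{p^\infty}$, are exactly the deferred fact that $f$ is determined by $f(1)$ and has finite image; so your route can be completed, but the direct computation already proves the general statement and the basic-subgroup machinery buys you nothing.
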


\begin{proof} For every prime $q \neq p,$ it is clear that $q\Q _p=\Q_p$ i.e. $\Q_p$ is $q$-divisible, and $T_q(N)$ is reduced. Then for $\pi _q \circ f: \Q_p \to T_q (N)$ , where $\pi_q: N \to T_q(N)$ is the natural projection, $(\pi_q\circ f)(\Q _p)$ is a $q$-divisible subgroup of $T_q(N)$. Therefore  $(\pi_q \circ f)(\Q _p)$ is  divisible, and so $\pi_q \circ f=0$, because $T_q (N)$ is reduced. Thus $\Image f = f(\Q_p) \subseteq T_p(N)$. Put $a=f(1)$ and $o(a) = p^n$, where $o(a)$ the order of $a$. Let $bc^{-1}$ be any element of $\Q_p$ with $\gcd (c,p)=1$. Then $\gcd (c, p^n)=1,$ therefore $cy +p^n z=1$ for some $y,z \in \Z$. Now $b=bcy+bp^nz,$ so $bc^{-1}=by +bp^n zc^{-1}.$ Note that, $cf(bp^nzc^{-1})=bzp^nf(1)=zp^na=0$. Let $x=f(bp^nzc^{-1})$ and $o(x) = p^m$. Since $\gcd (c,p^m)=1,$ we have $cu + p^mv=1$ for some $u,v \in \Z$. Then $x=ucx +vp^m x=ucx=0$, and so $f(bc^{-1})=f(by)+x=f(by)=byf(1) \in  \langle f(1) \rangle$. Hence $\Image f $ is contained in $ \langle f(1) \rangle$, and so it  is bounded.
\end{proof}

A \emph{cotorsion} group $G$ is a group satisfying $\Ext(\Q, G)=0$.
\begin{theorem} There is a group $G$ such  that $G$ is not pure-split and every reduced torsion group $N$ is $G$-pure-injective. Hence a pi-poor group can not be torsion.

\end{theorem}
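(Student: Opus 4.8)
The plan is to exhibit the group $G=\Q_p \oplus \left(\bigoplus_{n=1}^{\infty}\Z_{p^n}\right)$ (or some close variant, possibly with a torsion-free cotorsion summand adjoined) and verify the two required properties separately: that every reduced torsion group is $G$-pure-injective, and that $G$ is not pure-split.

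First I would establish that every reduced torsion group $N$ is $G$-pure-injective. Let $H$ be a pure subgroup of $G$ and $f\colon H\to N$ a homomorphism. The key point is to understand $f$ on the $\Q_p$-part: by Lemma \ref{Qp}, any homomorphism from $\Q_p$ into a reduced torsion group has bounded image, and more precisely the image lands in a finite cyclic $p$-group. One should check that $H\cap\Q_p$ is itself a pure subgroup of $\Q_p$, hence either $0$ or all of $\Q_p$ (the only pure subgroups of the rank-one $p$-divisible-away-from-$p$ group $\Q_p$ being $0$ and $\Q_p$ — here one uses that a proper pure subgroup of a rational group cannot have the same $p$-heights), so $f(H\cap\Q_p)$ is contained in a bounded, hence pure-injective, subgroup of $N$. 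Combined with the fact that $\bigoplus_{n=1}^\infty\Z_{p^n}$ contributes a torsion $p$-group and $N$ restricted to that part behaves as in Proposition \ref{prop. pi-poor torsion Tp unbounded}, one assembles the extension $h\colon G\to N$ by extending over the bounded (pure-injective) image on each piece. The technical content is that $N$ reduced torsion forces $f(H)$ bounded, which reduces $G$-pure-injectivity to pure-injectivity of bounded groups (\cite[Theorem 27.5]{FuchsAbelvol1} and the pure-injectivity of bounded groups).

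Second, $G$ is not pure-split: this follows immediately from the fact, already proved inside Proposition \ref{prop. pi-poor torsion Tp unbounded}, that $\bigoplus_{n=1}^{\infty}\Z_{p^n}$ is not pure-split, together with the observation that a direct summand of a pure-split group is pure-split and $\bigoplus_{n=1}^\infty\Z_{p^n}$ is a direct summand of $G$. Indeed the exact sequence $0\to K\to\bigoplus_{n=1}^\infty\Z_{p^n}\to\Z_{p^\infty}\to 0$ is pure and non-split since the middle term is reduced. Finally, the last sentence of the theorem is a formal consequence: if $G'$ were a torsion pi-poor group, then since $G$ is not pure-split, $G$ would fail to be $G'$-pure-injective; but our $G'$ torsion, and $G$ exhibits every reduced torsion $N$ as $G$-pure-injective — wait, the direction is the reverse. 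More carefully: a pi-poor torsion group $G'$ must have each $T_p(G')$ unbounded by Proposition \ref{prop. pi-poor torsion Tp unbounded}, yet one shows directly that such a $G'$ is $\left(\bigoplus_{n}\Z_{p^n}\oplus\Q_p\right)$-pure-injective style arguments fail; the clean route is: $G'$ torsion is $G$-pure-injective by the first part, but $G$ is not pure-split, contradicting pi-poorness of $G'$.

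The main obstacle I anticipate is the first part — proving that an \emph{arbitrary} reduced torsion group $N$ (not just one summand of $M$) is $G$-pure-injective. The subtlety is controlling homomorphisms out of pure subgroups of $G$ simultaneously on the $\Q_p$ summand and on $\bigoplus_n\Z_{p^n}$, and verifying that a pure subgroup $H\le G$ decomposes compatibly with these two summands (this need not happen literally, so one must argue more carefully: analyze the torsion part $T(H)$, which is pure in $\bigoplus_n\Z_{p^n}$ and hence a direct sum of cyclic $p$-groups, and the torsion-free rank-one quotient, and use Lemma \ref{Qp} to bound $f$ on any lift). Getting the bookkeeping right so that the extensions glue into a single $h\colon G\to N$ is where the real work lies; everything else is formal given the earlier lemmas.
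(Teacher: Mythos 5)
There is a genuine gap, and it is fatal to the choice of $G$. For $G=\Q_p\oplus\bigl(\bigoplus_{n=1}^{\infty}\Z_{p^n}\bigr)$ the claim that \emph{every} reduced torsion group $N$ is $G$-pure-injective is false. Take $N=K$, where $K$ is the kernel of the pure epimorphism $g:\bigoplus_{n=1}^{\infty}\Z_{p^n}\to\Z_{p^{\infty}}$ that you yourself invoke in the second half of your argument. Then $K$ is a reduced torsion group (a direct sum of cyclic $p$-groups) and is a pure subgroup of the direct summand $\bigoplus_{n}\Z_{p^n}$, hence pure in $G$. If $K$ were $G$-pure-injective, the identity map $K\to K$ would extend to a homomorphism $G\to K$, and restricting to the summand $\bigoplus_{n}\Z_{p^n}$ would split the pure-exact sequence $0\to K\to\bigoplus_{n}\Z_{p^n}\to\Z_{p^{\infty}}\to 0$, giving $\bigoplus_{n}\Z_{p^n}\cong K\oplus\Z_{p^{\infty}}$ and contradicting reducedness. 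This is precisely the point where your stated ``technical content'' --- that $N$ reduced torsion forces $f(H)$ bounded --- breaks down: it holds for homomorphisms out of $\Q_p$ (Lemma \ref{Qp}), but not for homomorphisms out of unbounded pure \emph{torsion} subgroups of $G$, which your $G$ has in abundance. No amount of bookkeeping in the gluing step can repair this, because the obstruction already lives entirely inside the torsion summand.

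The paper avoids this by choosing $G$ \emph{torsion-free}: since $\Q_p$ is not cotorsion, $\Ext(\Q,\Q_p)\neq 0$, so there is a non-splitting pure-exact sequence $0\to\Q_p\to G\to\Q\to 0$; this $G$ is automatically not pure-split, is torsion-free of rank $2$, and is $q$-divisible for all primes $q\neq p$. Its pure subgroups are then severely constrained --- a rank-one pure subgroup is isomorphic either to $\Q$ (split off by injectivity) or to $\Q_p$ (same type, so \cite[Theorem 85.1]{FuchsAbelvol1} applies), and a rank-two pure subgroup must be all of $G$ by the rank additivity $r_0(G)=r_0(K)+r_0(G/K)$. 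Only in this torsion-free setting does Lemma \ref{Qp} deliver the boundedness of images that your outline relies on. If you want to salvage your write-up, you should discard the $\bigoplus_{n}\Z_{p^n}$ summand entirely and work with a nonsplit pure extension of $\Q$ by $\Q_p$; your second half (non-pure-splitness) and your final deduction that a torsion group cannot be pi-poor are then fine as stated.
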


\begin{proof} Fix any prime $p$.  Since $\Q_p$ is not cotorsion, $\Ext (\Q, \Q_p)\neq 0$ (see, \cite{FuchsAbelvol1}, p.226, Ex.15). So there is a non splitting pure sequence $$0 \to \Q _p \to G \to \Q \to 0.$$ Hence $G$ is not pure-split. For every prime $q\neq p$, $\Q_p$ and $\Q$ are $q$-divisible, therefore $G$ is also $q$-divisible. We claim that, $N$ is $G$-pure injective. Without loss of generality we can assume that $\Q_p$ is a subgroup of $G$ and $G/\Q_p = \Q$.
Let $K$ be any nonzero pure subgroup of $G$ and $f:K \rightarrow N$ be any homomorphism where $N$ is a torsion reduced group. Then $K$ is $q$-divisible for every prime $q\neq p$ since $K$ is a pure subgroup of $G$ and $G$ is $q$-divisible.
Clearly the rank of $K$ is at most 2. So have two cases:

{\bf Case I:} $r_0(K)=1$.  If $K$ is also $p$-divisible, then $K$ is divisible. So $K\cong \Q$, and the inclusion  $K \to G$ splits, so $f$ can be extended to a homomorphism $f': G\rightarrow N$. Now let $K$ be not $p$-divisible. $K$ and $\Q_p$ are of the same type, and so $K \cong \Q_p$ (see, \cite[Theorem 85.1]{FuchsAbelvol1}). Therefore $\Image f$ is bounded by Lemma \ref{Qp}. Then $\Image f$ is pure-injective, hence $f:K \to N$ can be extended to a homomorphism $f': G \to \Image f \leq N$.

{\bf Case II:} $r_0(K)= 2$: We claim that $K=G$. Otherwise, since $G/K$ is a nonzero torsion-free group,  $r_0(G/K)\geq 1$. Then  $2=r_0(G)=r_0(K) + r_0(G/K)>2$,  a contradiction. Hence $G=K$.

As a consequence, $N$ is $G$-pure-injective. This implies that $N$ is not pi-poor.

\end{proof}

\begin{corollary} Let $M$ be a pi-poor group. Then $M\neq T(M)$  and $T_p(M)$ is unbounded for every $p \in P$.

\end{corollary}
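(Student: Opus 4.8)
The plan is to derive both assertions directly from the two results immediately preceding the corollary. For the statement that $T_p(M)$ is unbounded for every prime $p$, there is literally nothing to add: this is exactly the content of Proposition \ref{prop. pi-poor torsion Tp unbounded} applied to the pi-poor group $M$.

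For the assertion $M \neq T(M)$, I would argue by contradiction, assuming $M$ is torsion. The preceding Theorem exhibits a group $G$ that is not pure-split while every reduced torsion group lies in its pure-injectivity domain; thus it is enough to verify that an \emph{arbitrary} torsion group is $G$-pure-injective, since then $G$ certifies that such a group is not pi-poor. Write $M = D \oplus R$ with $D$ the divisible part (a divisible torsion group, hence an injective $\Z$-module) and $R$ reduced torsion. Given a pure subgroup $K \leq G$ and a homomorphism $f\colon K \to M$, compose $f$ with the two projections of $M$: the component $K \to D$ extends to $G$ because $D$ is injective, and the component $K \to R$ extends to $G$ by the Theorem (as $R$ is reduced torsion). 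The sum of the two extensions extends $f$, so $M$ is $G$-pure-injective, contradicting the fact that $M$ is pi-poor. Hence $M$ cannot be torsion.

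The only step needing a word of care is this reduction of the general torsion case to the reduced torsion case treated by the Theorem; everything else is a direct citation. In fact, one may bypass even that by simply quoting the sentence ``a pi-poor group can not be torsion'' already recorded in the statement of that Theorem, after which the corollary is immediate from it together with Proposition \ref{prop. pi-poor torsion Tp unbounded}.
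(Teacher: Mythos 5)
Your proposal is correct and matches the paper's intent exactly: the paper states this corollary without proof, as an immediate consequence of Proposition \ref{prop. pi-poor torsion Tp unbounded} and the theorem preceding the corollary (whose statement already records that a pi-poor group cannot be torsion). Your extra observation---splitting an arbitrary torsion group as divisible-plus-reduced to extend the theorem's hypothesis from reduced torsion groups to all torsion groups---is a sound and worthwhile filling-in of a detail the paper leaves implicit.
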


\begin{lemma}\label{lemma:M +N poor iff M poor} Let $M$ and $N$ be right $R$-modules. Assume that $N$ is (pure-)injective. Then $M \oplus N$ is (pi-)poor if and only if $M$ is (pi-)poor.
\end{lemma}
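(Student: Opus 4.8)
The plan is to exploit the fact that injectivity (resp. pure-injectivity) domains behave well with respect to finite direct sums. First I would recall the standard fact that for any module $X$, a module $A \oplus B$ is $X$-injective if and only if both $A$ and $B$ are $X$-injective; the same statement holds verbatim with ``injective'' replaced by ``pure-injective'' throughout, since the extension property is checked morphism-by-morphism and a morphism into $A \oplus B$ splits as a pair of morphisms into $A$ and into $B$. In the language of the introduction this says $\mathfrak{In}^{-1}(M \oplus N) = \mathfrak{In}^{-1}(M) \cap \mathfrak{In}^{-1}(N)$, and analogously for the pure-injectivity domain.

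Now I would use the hypothesis that $N$ is (pure-)injective. If $N$ is injective then $\mathfrak{In}^{-1}(N)$ is the whole module category, so $\mathfrak{In}^{-1}(M\oplus N) = \mathfrak{In}^{-1}(M)$; similarly, if $N$ is pure-injective then its pure-injectivity domain is the whole category, so the pure-injectivity domain of $M \oplus N$ equals that of $M$. Since $M \oplus N$ is (pi-)poor precisely when its (pure-)injectivity domain is as small as possible — the class of semisimple modules in the poor case, or the class of pure-split modules in the pi-poor case — and this domain coincides with that of $M$, the two modules are (pi-)poor simultaneously. This gives both directions at once.

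For the pi-poor half one should be slightly careful: ``$M$ is pi-poor'' is the statement that $M$ being $X$-pure-injective forces $X$ to be pure-split, i.e. the pure-injectivity domain of $M$ is contained in the class of pure-split modules (it automatically contains all pure-split modules, since a pure submodule of such an $X$ is a direct summand and hence the extension problem is trivial). So ``pi-poor'' is equivalent to ``pure-injectivity domain $=$ class of pure-split modules,'' exactly parallel to the poor case, and the argument of the previous paragraph applies unchanged.

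The only real content is the first step — the decomposition of the (pure-)injectivity domain of a direct sum as the intersection of the domains of the summands — and even that is routine and presumably cited or considered folklore; I expect the author to simply invoke it. There is no genuine obstacle here, so the ``hard part'' is merely to state the finite-direct-sum lemma cleanly and to note that both ``poor'' and ``pi-poor'' are of the form ``injectivity-type domain equals the minimal possible class,'' after which the conclusion is immediate from $\mathfrak{In}^{-1}(M \oplus N) = \mathfrak{In}^{-1}(M) \cap \mathfrak{In}^{-1}(N)$ together with $N$ lying in every such domain.
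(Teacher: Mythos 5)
Your proposal is correct and follows essentially the same route as the paper: the paper's entire proof is the one-line observation that $M\oplus N$ is $B$-(pure-)injective if and only if $M$ is, which is exactly your identity $\mathfrak{In}^{-1}(M\oplus N)=\mathfrak{In}^{-1}(M)\cap\mathfrak{In}^{-1}(N)$ combined with $\mathfrak{In}^{-1}(N)$ being everything. Your additional remarks (that the pure-injectivity domain always contains the pure-split modules, so pi-poor means the domain is exactly that class) are accurate but left implicit in the paper.
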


\begin{proof}For a right $R$-module $B$, it is clear that $M\oplus N$ is $B$-(pure-)injective if and only if $M$ is $B$-(pure-)injective.
\end{proof}

\begin{example}
Let  $G= \oplus_{p \in P}\Z_p$. Then $G$ is poor by Theorem \ref{poorabelian groups}. On the other hand, since $T_p(G)=\Z_p$ is bounded,  $G$ is not pi-poor by Proposition \ref{prop. pi-poor torsion Tp unbounded}.
\end{example}

\begin{example} Let $M$ be as in Theorem \ref{pi-poor exist} and let $V$ be the sum of all  direct summands isomorphic to $\Z _p$. If $M=V \oplus K$, then $K$ is pi-poor by Lemma \ref{lemma:M +N poor iff M poor}. But $K$ is not poor by Theorem \ref{poorabelian groups}, since $K$ does not contain a direct summand isomorphic to $\Z_p$. So pi-poor modules need not be poor.

\end{example}

\begin{bibdiv}
\begin{biblist}

\bib{poorSergio}{article}{AUTHOR ={Alahmadi, A. N.}, AUTHOR={Alkan, M.}, AUTHOR={ L{\'o}pez-Permouth,
              S.}, TITLE = {Poor modules: the opposite of injectivity}, JOURNAL = {Glasg. Math. J.},

    VOLUME={52},
      YEAR= {2010},
    NUMBER = {A},
     PAGES = {7--17},}

\bib{AF}{book}{ title = {Rings and Categories of Modules},
  publisher = {Springer},
  year = {1992},
  author = { F. W. Anderson}, author={ K. R. Fuller},
  address = {New-York},

}

\bib{poorNE}{article}{
   author={Er, N.},
   author={L{\'o}pez-Permouth, S.},
   author={S{\"o}kmez, N.},
   title={Rings whose modules have maximal or minimal injectivity domains},
   journal={J. Algebra},
   volume={330},
   date={2011},
   pages={404--417},
  }

\bib{CompletelyDecomposableGroups}{article}{
   author={Fuchs, L.},
   author={Kert{\'e}sz, A.},
   author={Szele, T.},
   title={Abelian groups in which every serving subgroup is a direct
   summand},
   journal={Publ. Math. Debrecen},
   volume={3},
   date={1953},
   pages={95--105 (1954)},

}

\bib{FuchsAbelvol1}{book} {
    AUTHOR = {Fuchs, L.},
     TITLE = {Infinite abelian groups. {V}ol. {I}},
    SERIES = {Pure and Applied Mathematics, Vol. 36},
 PUBLISHER = {Academic Press, New York-London},
      YEAR = {1970},
     PAGES = {xi+290},

}

\bib{FuchsVol.2}{book}{
    AUTHOR = {Fuchs, L.},
     TITLE = {Infinite abelian groups. {V}ol. {II}},
      NOTE = {Pure and Applied Mathematics. Vol. 36-II},
 PUBLISHER = {Academic Press, New York-London},
      YEAR = {1973},
     PAGES = {ix+363},

}

\bib{pi-poorSergio}{article}{
    AUTHOR = {Harmanc{\i}, A.},  author ={  L{\'o}pez-Permouth,
              S.}, author ={ \"{U}ng\"{o}r, B.},
     TITLE = {On the pure-injectivity profile of a ring},
   JOURNAL = {Comm. Algebra},
  FJOURNAL = {Communications in Algebra},
    VOLUME = {},
      YEAR = {to appear},
    NUMBER = {},
     PAGES = {},
}

\bib{ContinuousAndDiscreteModules}{book}{
    AUTHOR = {Mohamed, S. H.}, author ={ M{\"u}ller, B. J.},
     TITLE = {Continuous and discrete modules},
    SERIES = {London Mathematical Society Lecture Note Series},
    VOLUME = {147},
 PUBLISHER = {Cambridge University Press, Cambridge},
      YEAR = {1990},
     PAGES = {viii+126},
      ISBN = {0-521-39975-0},

}

\end{biblist}
\end{bibdiv}

\end{document}